\newcommand{\PP}{\mathbb P}
\newcommand{\Z}{\mathbb Z}
\newcommand{\N}{\mathbb N}
\newcommand{\R}{\mathbb R}
\newcommand{\T}{\mathcal T}
\newcommand{\G}{\mathcal G}
\newcommand{\Pcal}{\mathcal P}
\newcommand{\eps}{\epsilon}
\DeclareMathOperator{\ch}{Conv}
\DeclareMathOperator{\sch}{SymConv}
\newtheorem{thm}{Theorem}
\newtheorem{lemma}[thm]{Lemma}
\newtheorem{cor}[thm]{Corollary}
\newtheorem*{defn}{Definition}
\newtheorem{claim}{Claim}
\newtheorem{q}[thm]{Question}
\begin{document}

\title{Asymmetry of $2$-step Transit Probabilities in $2$-Coloured Regular Graphs} 
\author{Ron Gray\thanks{School of Mathematical Sciences, Queen Mary University of London, Mile End Road, London E1 4NS, United Kingdom.}
\and
J. Robert Johnson\thanks{School of Mathematical Sciences, Queen Mary University of London, London E1 4NS, UK.  E-mail: r.johnson@qmul.ac.uk.}}
\maketitle

\begin{abstract}
Suppose that the vertices of a regular graph are coloured red and blue with an equal number of each (we call this a balanced colouring). Since the graph is undirected, the number of edges from a red vertex to a blue vertex is clearly the same as the number of edges from a blue vertex to a red vertex. However, if instead of edges we count walks of length $2$ which do not stay within their starting colour class, then this symmetry disappears. Our aim in this paper is to investigate how extreme this asymmetry can be.

Our main question is: Given a $d$-regular graph, for which pairs $(x,y)\in[0,1]^2$ is there a balanced colouring for which the probability that a random walk starting from a red vertex stays within the red class for at least $2$ steps is $x$, and the corresponding probability for blue is $y$? 

Our most general result is that for any $d$-regular graph, these pairs lie within the convex hull of the $2d$ points $\left\{\left(\frac{l}{d},\frac{l^2}{d^2}\right),\left(\frac{l^2}{d^2},\frac{l}{d}\right) :0\leq l\leq d\right\}$.

Our main focus is the torus for which we prove both sharper bounds and existence results via constructions. In particular, for the $2$-dimensional torus we show that asymptotically the region in which these pairs of probabilities can lie is exactly the convex hull of:
\[
\left\{\left(0,0\right),\left(\frac{1}{2},\frac{1}{4}\right),\left(\frac{3}{4},\frac{9}{16}\right),\left(\frac{1}{4},\frac{1}{2}\right),\left(\frac{9}{16},\frac{3}{4}\right),\left(1,1\right)\right\}
\]
\end{abstract}

\section{Introduction}

The aim of this paper is to investigate some extremal properties of a notion of edge boundary coming from random walks on a graph.

Let $G$ be a $d$-regular graph on $n$ vertices. The edge-boundary $\partial(S)$ of a set of vertices $S\subseteq V(G)$ is defined by 
\[
\partial(S)=\{e\in E(G): |e\cap S|=1\}
\]
(that is the set of edges which go between $S$ and $S^c$). 

The problem of minimising $\partial(S)$ for a given $|S|$ is the edge isoperimetric inequality problem in extremal combinatorics which has been studied for various graphs. Most notably the edge isoperimetric inequality was proved in the hypercube by Harper \cite{harper}, Lindsey \cite{lindsey}, Bernstein \cite{bernstein} and Hart \cite{hart} and in the grid by Bollob\'as and Leader \cite{bollobas-leader}. See also \cite{bollobas-comb} for extremal combinatorics background.

Suppose that we colour the vertices of $G$ with red and blue. In other words, we partition $V(G)$ into two colour classes $R$ and $B$. Clearly $\partial(R)=\partial(B)$ as each of these counts $E(R,B)$: the set of edges with one end in each colour class. However, as we shall see, there is a natural extension of edge boundary for which the red to blue boundary may not be the same as the blue to red boundary.

To define this, we first relate the edge boundary to probabilities in random walks on $G$. A random walk on $G$ moves from vertex to vertex, at each discrete time step choosing uniformly from all the neighbours of its current location. For $S\subseteq V(G)$, let $(W^S_i)_{i=0}^{\infty}$ be the random walk on $G$ in which $W^S_0$ is a uniformly random vertex in $S$. We will be interested in the probability that the random walk starting in a given colour class stays within this colour class for a fixed number of steps. We define these probabilities as:
\[
P_t(R)=\PP(W^R_1,W^R_2,\ldots, W^R_t\in R), \quad P_t(B)=\PP(W^B_1,W^B_2,\ldots, W^B_t\in B).
\]

It is easy to see that the probability we stay inside the same colour class for one step is determined by the edge boundary of the colour class. 
\[
P_1(R)=1-\frac{\partial(R)}{d|R|}, \quad P_1(B)=1-\frac{\partial(B)}{d|B|}
\]
In particular, if we have a balanced colouring (that is $|R|=|B|$) then $P_1(R)=P_1(B)$ because $\partial(R)=\partial(B)$. However, for $t\geq 2$, it is not necessarily the case that $P_t(R)$ and $P_t(B)$ are the same. 

Like the edge boundary, the pair $(P_t(R),P_t(B))$ provides some quantitative measure of how interspersed the sets $R$ and $B$ are. But unlike the edge boundary, this quantity is not symmetric in the two coordinates (at least when $t\geq 2$). Our main aim in this paper is to investigate this asymmetry at the smallest $t$ for which it occurs: the $2$-step walks. 

We write $P_t(R,B)$ for the pair $(P_t(R),P_t(B))\in[0,1]^2$. Our main, and very general, question is:  

\begin{q}
Given a particular $d$-regular graph $G$ with $|V(G)|$ even and a balanced colouring $(R,B)$, what possible values in $[0,1]^2$ can the pair $P_2(R,B)$ take?
\end{q}

Our first main result is that for any $d$-regular graph, the pair $P_2(R,B)$ must lie in the region $D_d\subseteq[0,1]^2$ defined as the convex hull of the following $2d$ extreme points: 
\[
\left\{\left(\frac{l}{d},\frac{l^2}{d^2}\right):0 \leq l \leq d\right\} \cup \left\{\left(\frac{l^2}{d^2},\frac{l}{d}\right):0 \leq l \leq d\right\}.
\]

As a simple example, we briefly describe the behaviour for some $2$-regular graphs. For the cycle $C_n$, the set above is essentially best possible, but for a disjoint union of $4$-cycles the true answer is substantially smaller.

The heart of the paper concerns the discrete torus $[k]^m$ and in particular, bounding the possible region for $P_2(R,B)$ asymptotically for large $k$. This is a $2m$-regular graph so by our first result this region is contained in $D_{2m}$.

For all $m$ we show that the extreme points of the region $D_{2m}$ corresponding to $l=0,m,2m-2,2m-1,m$ can be achieved but that those corresponding to $1\leq l\leq m-1$ cannot. The constructions showing these points can be achieved are based on independent sets $S\subseteq\Z^m$ with the property that every element of $S^c$ has exactly $r$ neighbours in $S$. We call these independent exact $r$-covers. For the $2$-dimensional torus it is possible to give a more intricate argument proving an exact result (asymptotically for large $k$). We show that for $m=2$ the possible region for $P_2(R,B)$ is significantly smaller than $D_4$; it is the convex hull of the following set of points
\[
\left\{\left(0,0\right),\left(\frac{1}{2},\frac{1}{4}\right),\left(\frac{3}{4},\frac{9}{16}\right),\left(1,1\right),\left(\frac{1}{4},\frac{1}{2}\right),\left(\frac{9}{16},\frac{3}{4}\right)\right\}.
\]
We finish with some questions and open problems.

We remark that another measure of boundary related to random walks is the expectation of the escape time (the smallest time at which the random walk $(W^S_i)_{i=0}^{\infty}$ reaches a vertex outside $S$). The natural extremal question here is for a given $|S|$, which $S$ maximises the expected escape time? In the setting of the hypercube, Samorodnitsky \cite{samorodnitsky} showed that the extremal sets are subcubes (exactly those sets which minimise edge boundary).

Finally, we note that while our motivation is purely mathematical, this notion of using random walks to measure spatial dispersion has been used to quantify socioeconomic segregation in various contexts. In these applications, the fact that it may be easier for a random walk to move from $R$ to $B$ than from $B$ to $R$ can be socially significant. For examples of work in this direction see \cite{BV,enzo2,enzo1}.

\section{Notation and Preliminaries}

Throughout, $G=(V,E)$ will be a $d$-regular graph with $n$ vertices and $n$ will be even. We will consider balanced colourings of $V$, that is partitions $V=R\cup B$ with $|R|=|B|=\frac{n}{2}$. For a vertex $v\in R$ we write $d_R$ for the degree of $v$ in the subgraph induced by $R$:
\[
d_R(v)=|\{u\in R : (u,v)\in E\}|.
\]
We write $R_i$ for the set of vertices in $R$ with $d_R(v)=i$:
\[
R_i=\{v\in R : d_R(v)=i\}
\]
and $r_i=|R_i|$.

The internal degree sum $I(R)$ of $R$ (or twice the number of internal edges in $R$) is
\[
I(R)=\sum_{v\in R} d_R(v)=\sum_{i=1}^d ir_i
\]
We similarly define $B_i, b_i$ and $I(B)$ for the colour class $B$. As our colouring is balanced $I(B)=d|B|-\partial(B)=d|R|-\partial(R)=I(R)$.

The $r_i$ also determine the $2$-step probabilities $P_2(R)$ as the following Lemma shows.

\begin{lemma}\label{squares}
If $G$ is an $n$-vertex, $d$-regular graph then
\[
P_2(R)=\frac{2}{nd^2}\sum_{v\in R} d^2_R(v)=\frac{2}{nd^2}\sum_{i=1}^d i^2r_i
\]
\end{lemma}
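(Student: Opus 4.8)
The plan is to compute $P_2(R)$ directly from its definition as a probability, conditioning on the first step of the walk. Recall $P_2(R) = \PP(W^R_1, W^R_2 \in R)$, where $W^R_0$ is uniform on $R$. Since $|R| = n/2$, the starting vertex is uniform on a set of size $n/2$, so I would write
\[
P_2(R) = \frac{2}{n} \sum_{v \in R} \PP(W_1, W_2 \in R \mid W_0 = v).
\]
Now I would further condition on the value of $W_1$. From $v$, the walk steps to a uniformly random neighbour; the probability this neighbour lies in $R$ and is a specific vertex $u \in R$ is $\frac{1}{d}$ when $u \sim v$ and $0$ otherwise. So the contribution from starting vertex $v$ is $\frac{1}{d}\sum_{u \in R, u\sim v} \PP(W_2 \in R \mid W_1 = u)$.

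Next I would observe that $\PP(W_2 \in R \mid W_1 = u) = \frac{d_R(u)}{d}$, since from $u$ the walk moves to a uniform neighbour and exactly $d_R(u)$ of the $d$ neighbours lie in $R$. Substituting back gives
\[
P_2(R) = \frac{2}{nd} \sum_{v \in R} \sum_{\substack{u \in R \\ u \sim v}} \frac{d_R(u)}{d} = \frac{2}{nd^2} \sum_{v \in R} \sum_{\substack{u \in R \\ u \sim v}} d_R(u).
\]
The inner double sum counts, over ordered pairs $(v,u)$ of adjacent vertices both in $R$, the quantity $d_R(u)$. Swapping the order of summation, each $u \in R$ is counted once for each of its $d_R(u)$ neighbours $v$ in $R$, contributing $d_R(u)$ each time, so $\sum_{v\in R}\sum_{u\in R, u\sim v} d_R(u) = \sum_{u \in R} d_R(u)^2$. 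This yields $P_2(R) = \frac{2}{nd^2}\sum_{v\in R} d_R^2(v)$, and then regrouping vertices by their internal degree $i$ (there are $r_i$ vertices with $d_R(v) = i$) gives $\frac{2}{nd^2}\sum_{i=1}^d i^2 r_i$, as claimed. The $i=0$ term vanishes so starting the sum at $1$ is harmless.

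There is no serious obstacle here; the only point requiring a little care is the double-counting step — being careful that the symmetry of the edge relation means each ordered adjacent pair $(v,u)$ with both endpoints in $R$ is being summed, and that reindexing by $u$ correctly produces the square $d_R(u)^2$ rather than, say, $\sum_{u} d_R(u)$ or a factor of $\tfrac12$. One could alternatively phrase the whole computation in terms of counting walks of length $2$ with all three vertices in $R$: such a walk $v_0 v_1 v_2$ is determined by its midpoint $v_1 \in R$ together with an ordered choice of two (not necessarily distinct) $R$-neighbours of $v_1$, giving exactly $\sum_{v_1 \in R} d_R(v_1)^2$ such walks, and dividing by the $\frac{n}{2} \cdot d^2$ equally likely walks from a uniform start in $R$ gives the result immediately. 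I would likely present this second, cleaner counting argument.
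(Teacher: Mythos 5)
Your proposal is correct, and your preferred ``second'' argument (counting $2$-step walks inside $R$ by their midpoint, giving $\sum_{v\in R} d_R(v)^2$, then dividing by the $\tfrac{n}{2}d^2$ equally likely walks) is exactly the paper's proof; your first, conditioning-based computation is just a probabilistic rephrasing of the same count, with the summation swap playing the role of the midpoint decomposition. No gaps.
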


\begin{proof}
The number of $2$-step walks consisting entirely of vertices in $R$ and with middle vertex $u$ is $d_R(u)^2$ (we need to choose the first and last vertex in the walk independently from among all neighbours of $u$ in $R$). Hence the total number of $2$-step walks consisting entirely of vertices in $R$ is
\[
\sum_{u\in R} d_R(u)^2=\sum_{i=1}^d i^2r_i.
\]
Since the total number of $2$-step walks in $G$ starting from a vertex in R is $|R|d^2$ we have
\[
P_2(R)=\frac{\sum_{i=1}^d i^2r_i}{|R|d^2}=\frac{2}{nd^2}\sum_{i=1}^d i^2r_i.
\]
\end{proof}

If $S=\{s_1,\ldots,s_m\}$ is a finite set of points in $\R^2$ we write $\ch(S)$ for the convex hull of $S$. If $P_2(R,B)=(x,y)$ then swapping the colours gives a colouring with $P_2(R,B)=(y,x)$. This means that our set of possible values for $P_2(R,B)$ is symmetric in the coordinates and it will be useful to define a symmetrised version of the convex hull to describe it. For $S\subseteq\R^2$ let $\overline{S}=\{(y,x) : (x,y)\in S\}$. We write $\sch(S)$ for the set $\ch(S\cup\overline{S})$. 

\section{Bounding the possible values of $P_2(R,B)$}

Our first main result determines a region in $[0,1]^2$ within which $P_2(R,B)$ lies for any $d$-regular graph.

\begin{thm}\label{main:thm}
Let $G$ be a $d$-regular graph with $|V(G)|$ even and let $D_d \subset [0,1]^2$ be the set $\sch\left(\left\{\left(\frac{l}{d},\frac{l^2}{d^2}\right):0 \leq l \leq d\right\}\right)$. We have $P_2(R,B)\in D_d$ for all balanced colourings $(R,B)$ of $G$. 
\end{thm}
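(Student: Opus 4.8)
The plan is to express both coordinates of $P_2(R,B)$ in terms of the degree-distribution data $(r_i)$ and $(b_i)$ using Lemma~\ref{squares}, and then to show that the resulting point is a convex combination of the claimed $2d$ extreme points. Concretely, write $P_2(R) = \frac{2}{nd^2}\sum_i i^2 r_i$ and $P_2(B) = \frac{2}{nd^2}\sum_i i^2 b_i$. The natural idea is to think of each red vertex $v$ as contributing the point $\left(\frac{d_R(v)^2}{d^2}, \ast\right)$ to the first coordinate and each blue vertex $w$ contributing $\left(\ast, \frac{d_B(w)^2}{d^2}\right)$ to the second; the difficulty is that these contributions are not naturally paired, so we cannot directly write $P_2(R,B)$ as an average of points of the form $\left(\frac{l^2}{d^2}, \frac{l^2}{d^2}\right)$.

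The key structural fact that breaks the deadlock is the balance identity $I(R) = I(B)$, i.e.\ $\sum_i i\, r_i = \sum_i i\, b_i$; call this common value $I$. I would use this to set up a transportation/coupling argument: match up the ``internal degree mass'' of $R$ with that of $B$. Think of $R$ as providing $I$ units of mass, where $r_i$ vertices each provide $i$ units all sitting at ``height'' $\frac{i}{d}$ in a suitable sense (since a degree-$i$ red vertex contributes $i^2 = i\cdot i$ to the relevant sum, it is natural to view it as $i$ units of mass at level $\frac{i}{d}$, contributing $\frac{i}{d}$ each to $P_2(R)$ after normalisation). Do the same for $B$. Then take \emph{any} coupling (joint distribution) of these two mass distributions: a unit of red mass at level $\frac{l}{d}$ paired with a unit of blue mass at level $\frac{l'}{d}$ contributes, in the appropriate normalisation, to a point that we want to dominate by a convex combination of $\left(\frac{l}{d}, \frac{(l')^2}{d^2}\right)$-type points.

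More carefully, I expect the cleanest route is: normalise so that $P_2(R) = \sum_i \frac{i}{d}\cdot \frac{i r_i}{nd/2}$ and similarly $P_2(B) = \sum_j \frac{j}{d}\cdot\frac{j b_j}{nd/2}$, noting $\sum_i \frac{i r_i}{nd/2} = \frac{2I}{nd} =: \lambda \le 1$ and likewise for $B$ with the same $\lambda$ (by the balance identity). So $P_2(R) = \lambda \cdot \mathbb{E}\!\left[\frac{X}{d}\right]$ and $P_2(B) = \lambda\cdot\mathbb{E}\!\left[\frac{Y}{d}\right]$ where $X$ (resp.\ $Y$) is the level of a uniformly random unit of red (resp.\ blue) internal-degree mass. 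Now couple $X$ and $Y$ arbitrarily on the same probability space (possible since both have total mass $\lambda$, after rescaling to probability $1$); for each outcome $(X,Y) = (l, l')$ use that the point $\left(\frac{l}{d}, \frac{l'}{d}\right)$ lies in $\sch\!\left(\left\{\left(\frac{l}{d}, \frac{l^2}{d^2}\right)\right\}\right) = D_d$ — wait, that is not quite it, since $\frac{l'}{d}$ is linear not quadratic. The correct observation is that $\left(\frac{l}{d}, \frac{(l')^2}{d^2}\right) \in D_d$ trivially (it is one of the extreme points when $l = l'$, and otherwise it is still dominated appropriately), and then one needs $P_2(B) = \lambda\,\mathbb{E}[\frac{Y}{d}]$ to be replaced by $\lambda\,\mathbb{E}[\frac{Y^2}{d^2}]$-type quantity — so actually the right decomposition assigns each \emph{vertex} (not each unit of mass) its true square. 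I would therefore revisit the weighting so that the $2d$ extreme points $\left(\frac{l}{d}, \frac{l^2}{d^2}\right)$ and their reflections appear exactly, using the two sums $\sum i r_i$ and $\sum i^2 r_i$ together with $I(R) = I(B)$ and $|R| = |B| = n/2$ as the three linear constraints that pin the point down inside the hull.

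The main obstacle, and the step I would spend the most care on, is exactly this bookkeeping: showing that the three linear constraints ($|R| = |B|$, $I(R) = I(B)$, and the definitions of $P_2(R), P_2(B)$) force the point $(P_2(R), P_2(B))$ into $\mathrm{SymConv}$ of the given points, i.e.\ producing explicit nonnegative coefficients. I expect this reduces to the one-variable fact that for each $i$, the point $\left(\frac{i}{d}, \frac{i^2}{d^2}\right)$ is on the lower boundary and $\left(\frac{i^2}{d^2}, \frac{i}{d}\right)$ on the reflected boundary, combined with the observation that the ``cross terms'' forced by pairing red degree-$i$ mass with blue degree-$j$ mass always stay below the upper envelope because $\frac{ij}{d^2} \le \max\!\left(\frac{i^2}{d^2}, \frac{j^2}{d^2}\right) \le$ the relevant convex combination. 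Handling the boundary/extremal behaviour (when some $r_i$ or $b_i$ is forced to be $0$, or when $\lambda = 1$) and checking the argument degrades gracefully at the endpoints $l = 0$ and $l = d$ is where the routine-looking calculation could hide a subtlety, so I would verify those cases explicitly.
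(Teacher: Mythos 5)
Your proposal correctly isolates the two facts the argument must turn on --- Lemma \ref{squares} and the balance identity $I(R)=I(B)$, which gives a single parameter $\lambda=\frac{2I(R)}{nd}$ shared by the two colour classes --- but it never closes the argument, and the route you sketch for closing it does not work. The coupling/transportation step fails at exactly the point you half-notice: writing $(P_2(R),P_2(B))=(1-\lambda)(0,0)+\lambda\,\mathbb{E}\bigl[\bigl(\tfrac{X}{d},\tfrac{Y}{d}\bigr)\bigr]$ expresses the pair as a convex combination of points $\bigl(\tfrac{l}{d},\tfrac{l'}{d}\bigr)$ with $l$ and $l'$ unrelated, and such points are in general far outside $D_d$ (for instance $\bigl(1,\tfrac1d\bigr)$ when $d\geq 2$), so convexity of $D_d$ gives nothing. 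The patch you then assert --- that $\bigl(\tfrac{l}{d},\tfrac{(l')^2}{d^2}\bigr)\in D_d$ ``trivially'' for all $l,l'$ --- is false: for $d=2$ the point $(1,0)$ lies outside $D_2$, whose extreme points are $(0,0),\bigl(\tfrac12,\tfrac14\bigr),\bigl(\tfrac14,\tfrac12\bigr),(1,1)$. What remains is your explicit acknowledgement that the ``bookkeeping'' of producing nonnegative coefficients is the unresolved obstacle; but that bookkeeping is the entire content of the theorem, so as written there is a genuine gap.

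The missing idea is that no coupling is needed: the scalar $\lambda$ (the paper's $\alpha$) already controls each coordinate two-sidedly and separately. From your own identity $P_2(R)=\lambda\,\mathbb{E}[X/d]$ you get the upper bound $P_2(R)\leq\lambda$ (equivalently, by convexity, pushing all $d_R(v)$ to $\{0,d\}$ maximises $\sum_v d_R(v)^2$); this upper bound is absent from your sketch. For the lower bound, minimise $\sum_v d_R(v)^2$ subject to $\sum_v d_R(v)=\lambda\frac{dn}{2}$ \emph{and integrality}: the minimum has all degrees equal to $l$ or $l+1$ where $l\leq\lambda d<l+1$, giving $P_2(R)\geq\frac{1}{d^2}\bigl(\lambda d(2l+1)-l(l+1)\bigr)$, i.e.\ $P_2(R)\geq\phi(\lambda)$ where $\phi$ is the piecewise-linear interpolation of $l\mapsto l^2/d^2$; integrality is exactly what sharpens $y\geq x^2$ to the polygonal boundary with vertices $\bigl(\tfrac{l}{d},\tfrac{l^2}{d^2}\bigr)$. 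Since $I(R)=I(B)$ gives the same $\lambda$ for $B$, the pair lies in the box $[\phi(\lambda),\lambda]^2$, and the union of these boxes over $\lambda\in[0,1]$ is $\{(x,y):y\geq\phi(x),\,x\geq\phi(y)\}$, which is precisely $D_d$ (take $\lambda=\max(x,y)$ and use monotonicity of $\phi$). Your three linear constraints do enter, but only through $\lambda$; the per-colour extremal step, not any pairing of red and blue mass, is the ingredient your proposal lacks.
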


\begin{proof}
Define $\alpha$ so that $\sum_{v\in R} d_R(v)=\alpha\frac{dn}{2}$ (or equivalently and more explicity, set $\alpha=1-\frac{2\partial(R)}{dn}$). Clearly $0\leq\alpha\leq 1$.

By convexity, $\sum_{v\in R} d^2_R(v)$ is maximised when the $d_R(v)$ are all $0$ or $d$. We then must have $\frac{\alpha n}{2}$ vertices with $d_R(v)=d$ and $\frac{\left(1-\alpha\right)n}{2}$ vertices with $d_R(v)=0$ and so by Lemma \ref{squares}
\[
P_2(R)=\frac{2}{nd^2}\sum_{i=1}^d i^2r_i\leq\frac{2}{nd^2}d^2\frac{\alpha n}{2}=\alpha.
\]
Again, by convexity, $\sum_{v\in R} d^2_R(v)$ is minimised when the $d_R(v)$ are as equal as possible. So certainly, we cannot achieve a smaller value than by taking $d_R(v)=\alpha d$ for all $v$. Hence,
\[
P_2(R)=\frac{2}{nd^2}\sum_{i=1}^d i^2r_i\geq\frac{2}{nd^2}d^2\alpha^2 d^2 \frac{n}{2}=\alpha^2.
\]
Now crucially we have that $\sum_{v\in B} d_B(v)=\sum_{v\in R} d_R(v)=\alpha\frac{dn}{2}$. So, the same bounds (with the same value of $\alpha$) are valid for the other colour class $B$. 

It follows that, if $P_2(R)=x$ then $\alpha\geq x$ and $P_2(B)\geq\alpha^2\geq x^2=P_2(R)^2$. Similarly, if $P_2(B)=y$ then $\alpha\geq y$ and $P_2(R)\geq\alpha^2\geq y^2=P_2(B)^2$. This gives the weaker result that
\[
P_2(R,B)\in\{(x,y) : y\geq x^2, x\geq y^2\}.
\]
In other words, the possible pairs $P_2(R,B)$ all lie in the region bounded by the curves $y=x^2$ and $x=y^2$.

In this bound we did not make use of the fact that the $d_R(v)$ are all integers. Incorporating this in the argument minimising $P_2(R)$ gives a stronger bound. If $l\leq\alpha d<l+1$, the minimum of $P_2(R)$ is attained when the $d_R(v)$ are all $l$ or $l+1$. More precisely, we have $\frac{n}{2}\left(l+1-\alpha d\right)$ of the vertices having $d_R(v)=l$ and $\frac{n}{2}\left(\alpha d-l\right)$ of the vertices having $d_R(v)=l+1$. This gives a better lower bound for $P_2(R)$ as follows:
\begin{align*}
P_2(R)&=\frac{2}{nd^2}\sum_{i=1}^d i^2r_i\\
&\geq\frac{2}{nd^2}\left(l^2\frac{n}{2}\left(l+1-\alpha d\right)+(l+1)^2\frac{n}{2}\left(\alpha d-l\right)\right)\\
&=\frac{1}{d^2}\left(\alpha d(2l+1)-l(l+1)\right).
\end{align*}

For any fixed $l$ this is a linear function in $\alpha$. Moreover, if $\alpha=\frac{l}{d}$ or $\alpha=\frac{l+1}{d}$ then we recover the bound $P_2(R)\geq\alpha^2$. 

Using the same argument as before to constrain the possible values for the pair $P_2(R,B)$ we get that these must lie in the convex region defined by the extreme points 
\[
\left\{\left(\frac{l}{d},\frac{l^2}{d^2}\right):0 \leq l \leq d\right\} \cup \left\{\left(\frac{l^2}{d^2},\frac{l}{d}\right):0 \leq l \leq d\right\},
\]
as required.
\end{proof}

\section{Simple $2$-regular examples}

In this section, we analyse the behaviour of $P_2(R,B)$ for some $2$-regular graphs as simple illustrative examples. Note that the set $D_2$ has extreme points $\{(0,0),(\frac{1}{2},\frac{1}{4}),(\frac{1}{4},\frac{1}{2}),(1,1)\}$. 

First consider the $n$-cycle, $C_n$. Suppose that $G=C_n$ with $V=\{1,\ldots,n\}$ and $E=\{(i,i+1) : 1\leq i\leq n-1\}\cup\{(n,1)\}$. We can construct colourings of this graph which get close to each of the four extreme points of $D_2$. To simplify the description we assume that $n$ is a multiple of $4$.
\begin{itemize}
\item $R=\{1,\ldots,\frac{n}{2}\}, B=\{\frac{n}{2}+1,\ldots,n\}$, $P_2(R,B)=\left(1-\frac{3}{n},1-\frac{3}{n}\right)$
\item $R=\{2,4,6,\ldots,n\}, B=\{1,3,5,\ldots n-1\}$, $P_2(R,B)=\left(0,0\right)$
\item $R=\{x : 1\leq x\leq \frac{3n}{4}, x\not\equiv 0 \mod 3\}, B=V\setminus R$, $P_2(R,B)=\left(\frac{1}{4},\frac{1}{2}+O\left(\frac{1}{n}\right)\right)$
\item $B=\{x : 1\leq x\leq \frac{3n}{4}, x\not\equiv 0 \mod 3\}, R=V\setminus B$, $P_2(R,B)=\left(\frac{1}{2}+O\left(\frac{1}{n}\right),\frac{1}{4}\right)$
\end{itemize}

Note, in the asymmetric third case each red vertex has exactly one red neighbour (so $P_2(R)=\left(\frac{1}{2}\right)^2$). With a few exceptions (vertices $\frac{3n}{4},\frac{3n}{4}+1,n-1,n$), the blue vertices come in two types:
\begin{itemize}
\item those in $\{1,\ldots,\frac{3n}{4}-1\}$ have no blue neighbours (so contribute $0$ to $P_2(B)$); 
\item those in $\{\frac{3n}{4}+2,\ldots,n-1\}$ are at distance greater than $2$ from the nearest red vertex (so contribute $1$ to $P_2(B)$).
\end{itemize}
Since there are equal numbers of these two types $P_2(B)=\frac{1}{2}+O(\frac{1}{n})$.

Suppose we have a colouring $c_1$ of $C_n$ with $P_2(R,B)=(x_1,y_1)$ and another colouring $c_2$ of $C_n$ with $P_2(R,B)=(x_2,y_2)$. We will colour $C_{2n}$ using $c_1$ on the first half and $c_2$ on the second half. That is our colouring $c$ is: 
\[
c(x)=\begin{cases}
c_1(x) & \text{ if $1\leq x\leq n$;}\\
c_2(x-n) & \text{ if $n+1\leq x\leq 2n$.}
\end{cases}
\]
Each vertex apart from $1,2,n-1,n,n+1,n+2,2n-1,2n$ (that is those within distance $2$ of the `join' between the colourings) behaves exactly as it does in colouring $c_1$ or colouring $c_2$. Hence, this new colouring satisfies 
\[
P_2(R,B)=\left(\frac{1}{2}x_1+\frac{1}{2}x_2+O\left(\frac{1}{n}\right),\frac{1}{2}y_1+\frac{1}{2}y_2+O\left(\frac{1}{n}\right)\right).
\]
Similarly, we can join together $a$ copies of colouring $c_1$ and $b$ copies of colouring $c_2$ to obtain a colouring of $C_{(a+b)n}$ with
\[
P_2(R,B)=\left(\frac{a}{a+b}x_1+\frac{b}{a+b}x_2+O\left(\frac{1}{n}\right),\frac{a}{a+b}y_1+\frac{b}{a+b}y_2+O\left(\frac{1}{n}\right)\right).
\]
Taking $n$ large enough, this operation produces colourings in which $P_2(R,B)$ can be arbitrarily close to any point in $D_2$.

We see from this example that constructions matching the extreme points of $D_d$ are key. Also it may only be possible to achieve these up to $O(\frac{1}{n})$ so we will mainly be interested in asymptotic results.

Before we leave the $d=2$ case, note that this kind of construction will not be possible for all $2$-regular graphs. For instance, if $G$ is a disjoint union of copies of $C_4$ then it is not too difficult to see that the extreme point $(\frac{1}{2},\frac{1}{4})$ cannot be achieved. In fact, for this graph the set of possible pairs for $P_2(R,B)$ is essentially $\sch\left(\{(0,0),(\frac{1}{2},\frac{2}{3}),(1,1)\}\right)$.

\section{Torus}

As we noted earlier we will mainly be interested in asymptotic results. The next definition formalises this.

\begin{defn} 
Let $\G=(G_k)_{k=1}^{\infty}$ be a sequence of $d$-regular graphs with $|V(G_k)|$ even for all $k$ and tending to $\infty$ with $k$. Define $\Pcal(\G)\subseteq \mathbb{R}^2$ by $(x,y)\in\Pcal(\G)$ if and only if for every $\epsilon>0$ there exists some $k$ and a balanced colouring of $G_k$ with $|P_2(R)-x|<\epsilon$ and $|P_2(B)-y|<\epsilon$.
\end{defn}

For fixed $m\geq 2$, let $T^m_k$ be the torus $[k]^m$. That is the graph with vertex set $[k]^m$ in which vertices $(x_1,\ldots,x_m)$ and $(y_1,\ldots,y_m)$ are adjacent if $x_j=y_j \pm 1\mod m$ for exactly one coordinate $j$ while $x_i=y_i$ for all $i\not=j$. Let $\T^m$ be the sequence $(T^m_k)_{k=1}^{\infty}$

\begin{thm}\label{convex:thm}
The set $\Pcal(\T^m)$ is convex.
\end{thm}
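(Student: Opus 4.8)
The plan is to prove convexity of $\Pcal(\T^m)$ by a ``gluing'' argument, generalising the construction used for $C_n$ in the excerpt. Suppose $(x_1,y_1)$ and $(x_2,y_2)$ both lie in $\Pcal(\T^m)$, and let $\lambda\in[0,1]$ be rational, say $\lambda=a/(a+b)$ with $a,b\in\N$; by continuity it suffices to handle rational $\lambda$ and then take limits. Given $\eps>0$, choose $k$ large and balanced colourings $c_1,c_2$ of $T^m_k$ realising $(x_1,y_1)$ and $(x_2,y_2)$ to within $\eps$. We want to build a single balanced colouring of some large torus $T^m_{K}$ that is locally a copy of $c_1$ on an $\tfrac{a}{a+b}$-fraction of its vertices and locally a copy of $c_2$ on the rest, with the ``seam'' between the two pieces being negligible in proportion.

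The key step is the choice of host graph and how the pieces are laid out. Work in $T^m_{(a+b)k}$ (or, if divisibility is an issue, $T^m_{N}$ for a suitable multiple $N$ of $k$). Partition its first coordinate into consecutive blocks of length $k$: there are $a+b$ such blocks. On the slabs $\{x_1\in \text{block } j\}$ for $a$ of the blocks, colour according to $c_1$ (translated), and on the remaining $b$ blocks colour according to $c_2$. Because $T^m_k$ is vertex-transitive and each slab is an isometric copy of a ``thickened'' torus, every vertex not within distance $2$ (in the first coordinate) of a block boundary sees exactly the local neighbourhood it would see in $c_1$ or $c_2$, so its contribution $d_R(v)^2$ or $d_R(v)^2$ to the relevant sum in Lemma~\ref{squares} is unchanged. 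The number of vertices within distance $2$ of a seam is $O\!\left((a+b)k^{m-1}\right)$, which is an $O(1/k)$ fraction of the $(a+b)^m k^m$ total — wait, more carefully it is an $O\!\left(\tfrac{a+b}{(a+b)^{m}k}\right)$ fraction, in any case $o(1)$ as $k\to\infty$ — so by Lemma~\ref{squares} the colouring has
\[
P_2(R)=\lambda x_1+(1-\lambda)x_2+o(1),\qquad P_2(B)=\lambda y_1+(1-\lambda)y_2+o(1).
\]
It is also balanced, since each slab contributes equally many red and blue vertices. Letting $k\to\infty$ (and $\eps\to0$) shows $\lambda(x_1,y_1)+(1-\lambda)(x_2,y_2)\in\Pcal(\T^m)$ for all rational $\lambda$, hence for all $\lambda$.

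There are two points requiring care. First, divisibility: we need a single torus whose side length is simultaneously a multiple of $k$ and splits into the right proportion of $c_1$- and $c_2$-blocks; taking the side length to be $(a+b)k$ (and noting $|V|=((a+b)k)^m$ is even once $k$ is, so the colouring can be balanced) handles this cleanly, and we are free to enlarge $a,b$ to a common scale if the two realising colourings live on tori of different sizes $k_1,k_2$ — just pass to $T^m_{k_1k_2\cdots}$ and tile. Second, and this is the only genuinely delicate bookkeeping, one must check that cutting $c_i$ into a slab of the big torus really does preserve the induced subgraph on the bulk of the slab: a vertex of $T^m_k$ has all its non-first-coordinate edges intact automatically (the other $m-1$ coordinates wrap around exactly as before), and its two first-coordinate edges go to the adjacent first-coordinate layers, which within a single $k$-block are also exactly as in $T^m_k$ except at the two extreme layers of the block. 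So only $O(k^{m-1})$ vertices per seam are affected, as claimed. I expect this seam-counting — confirming the affected set is genuinely lower-order and that every unaffected vertex has an identical closed neighbourhood colouring — to be the main (though routine) obstacle; the convexity itself then follows formally.
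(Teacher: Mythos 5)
Your proposal is correct and is essentially the paper's own argument: both glue copies of the two colourings into a larger torus $T^m_{tk}$ in proportion approximating $\lambda$ (the paper assigns $s$ of the $t^m$ translated copies of $[k]^m$ to $c_1$, you group them into first-coordinate slabs and handle irrational $\lambda$ via closedness of $\Pcal(\T^m)$, which is immediate from its definition), and both conclude by noting the seam vertices are an $O(1/k)$ fraction. The only blemish is a harmless constant slip in your seam count (each seam hyperplane has $((a+b)k)^{m-1}$, not $k^{m-1}$, vertices), which does not affect the $O(1/k)$ conclusion.
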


\begin{proof}
Suppose that $(x_1,y_1), (x_2,y_2)\in \Pcal(\T^m)$ and $0<\lambda<1$. We will show that $\lambda(x_1,y_1)+(1-\lambda)(x_2,y_2)\in \Pcal(\T^m)$.

Given $\epsilon>0$, pick $k$ large enough that there is both a balanced colouring $C^1=(R^1,B^1)$ of $T^m_k$ with $|P_2(R^1)-x_1|,|P_2(B^1)-y_1|<\epsilon$ and a balanced colouring $C^2=(R^2,B^2)$ of $T^m_k$ with $|P_2(R^2)-x_2|,|P_2(B^2)-y_2|<\epsilon$. Also pick $s,t\in\N$ so that $|\frac{s}{t^m}-\lambda|\leq\epsilon$. 

We construct a balanced colouring $(R,B)$ of $[kt]^m$ by partitioning it into $t^m$ copies of $[k]^m$ in the obvious way and colouring $s$ of these copies with colouring $C^1$ and $t^m-s$ of them with colouring $C^2$. This clearly gives a balanced colouring. We say that a point of $[kt]^m$ is a bad point if it is close enough to the boundary of one of the partitioning copies of $[k]^m$ that it is is possible for a $2$-step walk from that point to reach a different partitioning $[k]^m$.The number of bad points is only $O(k^{m-1})$ so their contribution to $P_2(R)$ will only be $O(\frac{1}{k})$.

Suppose that $u\in[kt]^m$ is not bad. Then the probability that a $2$-step random walk starting from $u$ stays within its colour class can be found by conditioning on whether the copy of $[k]^m$ containing it was coloured with colouring $C^1$ or $C^2$. Indeed, for the balanced colouring $(R,B)$ of $[kt]^m$ we have
\[
P_2(R)=\frac{s}{t^m}P_2(R^1)+\left(1-\frac{s}{t^m}\right)P_2(R^2)+O\left(\frac{1}{k}\right).
\]
By the choice of $s$ and $t$ it follows that
\[
|P_2(R)-\lambda x_1-(1-\lambda)x_2|<c\epsilon+O\left(\frac{1}{k}\right)
\]
for some constant $c$. But the righthand side can be made arbitrarily small by choosing $\epsilon$ and $k$ suitably. 

Similarly
\[
|P_2(B)-\lambda y_1-(1-\lambda)y_2|<c\epsilon+O\left(\frac{1}{k}\right).
\]
It follows that $\lambda(x_1,y_1)+(1-\lambda)(x_2,y_2)\in \Pcal(\T^m)$ as required.
\end{proof}

A consequence of this result is that determining $\Pcal(T^m)$ reduces to finding its extreme points. Since $T^m_k$ is $2m$-regular we should be comparing this with $D_{2m}$.

The proof of Theorem \ref{main:thm} suggests that in order to attain the extreme points of $D_{2m}$ we need a colouring in which for all but $o(|V|)$ of the vertices, every vertex in $R$ has either $0$ or $2m$ neighbours in $R$ and every vertex in $B$ has the same number of neighbours in $B$ (or equivalently the same number of neighbours in $R$) or vice versa. The following definition, which we make for a general regular graph, formalises this idea and is key to constructing colourings which achieve extreme points. 

\begin{defn} 
Let $G$ be a $d$-regular graph and $0\leq r\leq d$. A set $S\subseteq V(G)$ is an \emph{independent exact $r$-cover} if it is an independent set in the graph $G$ and every element of $S^c$ is adjacent to exactly $r$ elements of $S$.
\end{defn}

By double-counting edges between $S$ and $S^c$ we see that the density (or asymptotic density if $G$ is infinite) of any independent exact $r$-cover is $\frac{r}{d+r}$.

Although we will use our constructions for the finite torus, it is convenient to express then in the infinite grid $\Z^m$, that is the graph with vertex set $\Z^m$ and two vertices adjacent if they differ by adding or subtracting $1$ in exactly one coordinate.

We first show how to use these objects to construct colourings of the torus with particular $P_2(R,B)$. The idea is to split the graph into two parts. In the first part we take red vertices forming an independent exact $r$-cover with the remaining vertices blue, in the second part we colour all vertices red. This will ensure that with a small number of exceptions, the red vertices will all have $0$ red neighbours or $m$ red neighbours (depending on whether they are in the first of second part) while the blue vertices (which are all in the first part) will all have $2m-r$ blue neighbours. The size and shape of the parts are chosen so that the colouring is balanced, and the number of exceptional vertices (those near the boundary of the parts) is small.

\begin{thm}\label{cover}
If an independent exact $r$-cover of $\Z^m$ exists then 
\[
\left(\left(\frac{2m-r}{2m}\right)^2,\frac{2m-r}{2m}\right)\in \Pcal(\T^m).
\]
\end{thm}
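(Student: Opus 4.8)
The plan is to turn the verbal sketch given just before the theorem statement into a precise construction on the finite torus $T^m_k$, using the independent exact $r$-cover of $\Z^m$ as a building block. First I would fix a fundamental domain: an independent exact $r$-cover of $\Z^m$ is (as remarked) of density $\frac{r}{d+r}=\frac{r}{2m+r}$, and such a cover is automatically periodic in the cases we care about, so I may assume it has period $p$ in every coordinate. Choosing $k$ to be a large multiple of $p$ ensures the cover descends to a genuine independent exact $r$-cover of $T^m_k$ — every vertex outside it has exactly $r$ neighbours inside it, with no boundary effects at all.

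Next I would split $T^m_k=[k]^m$ into two ``slabs'' along the first coordinate: a region $A=\{x : 1\le x_1\le a\}$ and its complement $A^c$, where $a$ is chosen (as a multiple of $p$) so that the final colouring is balanced. In $A^c$ I colour every vertex red. In $A$ I colour red exactly the vertices lying in the (restricted) independent exact $r$-cover, and blue the rest. A vertex in the cover occupies a fraction $\frac{r}{2m+r}$ of $A$, so the number of blue vertices is $|A|\cdot\frac{2m}{2m+r}$; setting this equal to $\frac{k^m}{2}$ determines $a = \frac{(2m+r)k}{4m}$, which is a valid choice (an integer multiple of $p$) provided $k$ is taken in the appropriate residue class — exactly the kind of divisibility we are free to impose since we only need $P_2(R,B)$ to be approached along \emph{some} sequence of $k$'s, and here $\Pcal(\T^m)$ is closed under the limit.

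Then I would compute $P_2(R)$ and $P_2(B)$ via Lemma~\ref{squares}, i.e. by classifying vertices according to their number of same-colour neighbours. Call a vertex \emph{bad} if it lies within distance $2$ of the slab boundary $\{x_1\in\{1,a,a+1,k\}\}$; there are only $O(k^{m-1})$ bad vertices, contributing $O(1/k)$ to each probability. Every good red vertex in $A^c$ has all $2m$ neighbours red (contributing $(2m)^2$ to the sum $\sum d_R(v)^2$); every good red vertex in $A$ lies in the cover, which is independent, so it has $0$ red neighbours; and every good blue vertex is in $A$ and, being outside the cover, has exactly $r$ red neighbours, hence $2m-r$ blue neighbours (contributing $(2m-r)^2$). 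Plugging in $|R|=|B|=k^m/2$, Lemma~\ref{squares} gives $P_2(B)=\frac{2}{k^m(2m)^2}\cdot\frac{k^m}{2}(2m-r)^2+O(1/k)=\left(\frac{2m-r}{2m}\right)^2+O(1/k)$, and $P_2(R)$ receives contributions only from the good red vertices of $A^c$, whose number is $|A^c|=k^m-a k^{m-1}=k^m\cdot\frac{2m-r}{4m}$, giving $P_2(R)=\frac{2}{k^m(2m)^2}\cdot\frac{k^m(2m-r)}{4m}\cdot(2m)^2+O(1/k)=\frac{2m-r}{2m}+O(1/k)$.

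Wait — I should double-check the assignment of coordinates: the claim has $P_2(R)=\left(\tfrac{2m-r}{2m}\right)^2$ and $P_2(B)=\tfrac{2m-r}{2m}$, so in fact it is the \emph{red} class that should be the one with a constant number of same-colour neighbours and the \emph{blue} class the one that is mostly ``all neighbours same colour''. So I would instead colour the cover \emph{blue}, fill the second slab with \emph{blue}, and make everything else red; the roles of $R$ and $B$ above swap and the arithmetic is identical, yielding $P_2(R,B)=\left(\left(\frac{2m-r}{2m}\right)^2,\frac{2m-r}{2m}\right)+O(1/k)$. Letting $k\to\infty$ through the admissible residue class and invoking the definition of $\Pcal$ finishes the proof. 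The main obstacle I anticipate is the bookkeeping around divisibility — ensuring simultaneously that $k$ is a multiple of the cover's period $p$ and that $a=\frac{(2m+r)k}{4m}$ is an integer (also a multiple of $p$) — but this is a routine congruence condition on $k$, and since we only need the target point to be a limit of achievable points it causes no real difficulty. A secondary point to be careful about is that an independent exact $r$-cover of $\Z^m$ really is periodic, or at least that one with finite period exists; if the hypothesis only guarantees existence, I would note that the local finiteness and the exactness condition force enough rigidity, or simply restrict attention to the explicit periodic covers that the later constructions in the paper actually supply.
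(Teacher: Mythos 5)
Your construction is essentially the paper's: split the torus into a slab of width $a\approx\frac{2m+r}{4m}k$ and its complement, colour one class as the cover inside the slab together with the whole of the other slab, and evaluate $P_2$ via Lemma~\ref{squares}, with the $O(k^{m-1})$ vertices near the slab interfaces contributing only $O(1/k)$. Your arithmetic, including the final colour swap so that the pair comes out as $\left(\left(\frac{2m-r}{2m}\right)^2,\frac{2m-r}{2m}\right)$, is correct.

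The genuine gap is the appeal to periodicity. The hypothesis of the theorem is only that \emph{some} independent exact $r$-cover of $\Z^m$ exists; nothing guarantees it is periodic, and your proposed justification (``local finiteness and the exactness condition force enough rigidity'') is not valid -- such covers need not be invariant under any lattice of translations (already for $r=1$, i.e.\ efficient dominating sets of $\Z^m$, non-periodic examples can be obtained from periodic ones by shifting layers), while falling back on the explicit covers of Theorem~\ref{cover-construction} proves a weaker statement than the conditional theorem. Periodicity is load-bearing in your argument in two places: it makes the cover descend exactly to $T^m_k$ with no boundary failures, and it makes the colouring exactly balanced with no correction. The repair is exactly what the paper does: intersect $S$ with the slab $X$ as a subset of the window $[k]^m$ (all failures of the exact $r$-cover property, whether from the window edge or the torus wraparound, involve only $O(k^{m-1})$ vertices and sit inside your ``bad'' set), show by double counting the edges between $S\cap X$ and $X\setminus S$ that $|X\setminus S|=\frac{k^m}{2}+O(k^{m-1})$, and then recolour a set $E$ of $O(k^{m-1})$ vertices to achieve exact balance. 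With that modification, and no divisibility conditions on $k$ beyond taking $k$ large, your estimates $P_2(R)=\left(\frac{2m-r}{2m}\right)^2+O\!\left(\frac{1}{k}\right)$ and $P_2(B)=\frac{2m-r}{2m}+o(1)$ go through unchanged and the proof is complete.
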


\begin{proof}
Let $S$ be an independent exact $r$-cover of $\Z^m$.

Let $a=\lceil\frac{2m+r}{4m}k\rceil$. We partition $[k]^m$ into two parts $X=[k]^{m-1}\times[a]$ and $Y=[k]^m\setminus X$. Now, let $R=((S\cap X)\cup Y)\triangle E$ and $B=[k]^m\setminus R=(X\setminus S)\triangle E$ where $E$ is a small set chosen to make $|R|=\frac{k^m}{2}$.

We can count the number of vertices in $X\setminus S$ by double counting the edges from a vertex in $S\cap X$ to a vertex in $X\setminus S$. For all but $O(k^{m-1})$ vertices this is determined by the exact $r$-cover property so:
\[
r|X\setminus S|=2m(|X|-|X\setminus S|)+O(k^{m-1}).
\]
Rewriting this we obtain
\begin{align*}
(2m+r)|X\setminus S|&=2mak^{m-1}+O(k^{m-1})\\
&=\frac{2m(2m+r)k^{m}}{4m}+O(k^{m-1}).
\end{align*}
We conclude that
\[
|X\setminus S|=\frac{k^m}{2}+O(k^{m-1}).
\]
So, we may take $|E|<ck^{m-1}$ for some constant $c$ depending on $m$ and $r$.

With the possible exception of vertices in or within distance $2$ of $E$ and vertices near the boundary of $X$ (that is with final coordinate in $\{1,2,a-1,a\}$) every vertex in $B$ has exactly $2m-r$ neighbours in $B$ and moreover each of those neighbours has exactly $2m-r$ neighbours in $B$. Hence we obtain
\[
P_2(B)=\left(\frac{2m-r}{2m}\right)^2+O\left(\frac{1}{k}\right).
\]

Similarly, proportion $2(1-\frac{a}{k})+o(1)=\frac{2m-r}{2m}+o(1)$ of the vertices in $R$ are at distance greater than $2$ from the nearest vertex in $B$ and 
proportion $\frac{r}{2m}+o(1)$ of the vertices in $R$ have all of their neighbours in $B$. Hence we obtain
\[
P_2(R)=\frac{2m-r}{2m}+o(1).
\]
By choosing $k$ large enough we get that 
\[
\left(\left(\frac{2m-r}{2m}\right)^2,\frac{2m-r}{2m}\right)\in \Pcal(\T^m)
\]
as required.
\end{proof}

We will give some constructions of independent exact $r$-covers in $\Z^m$ valid for several values of $r$. We also show that it is easy to scale up both $r$ and $m$ by a constant factor, so the most interesting open cases are those when $r$ and $m$ are coprime. 

\begin{thm}\label{cover-construction}
\begin{enumerate}
\item For any $m\geq 2$, an independent exact $r$-cover of $\Z^m$ exists when $r=0,1,2,m$ or $2m$.
\item If an independent exact $r$-cover of $\Z^m$ exists then so does an independent exact $\lambda r$-cover of $\Z^{\lambda m}$ for all $\lambda\in\N$.  
\end{enumerate}
\end{thm}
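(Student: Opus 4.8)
For part (1), the plan is to exhibit each cover explicitly as a periodic subset of $\Z^m$, i.e. as the preimage of a subset of some quotient $\Z^m/\Lambda$, and then just check independence and the exact $r$-cover condition coordinate by coordinate. The cases $r=0$ (take $S=\emptyset$) and $r=2m$ (take $S$ to be every vertex of one colour class in the bipartition by parity of coordinate-sum) are immediate. For $r=1$, I would use the classical perfect-code construction: $S=\{x\in\Z^m:\sum_i i x_i\equiv 0\pmod{2m+1}\}$, which is an independent set in which every vertex of $S^c$ has exactly one neighbour in $S$ (each vertex of $\Z^m$ is covered by exactly one element of $S\cup N(S)$, and these balls of radius $1$ partition $\Z^m$); independence follows since a ball of radius $1$ contains only one element of $S$. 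For $r=m$, the natural candidate is $S=\{x:\sum_i x_i\equiv 0\pmod 2\text{ and }\ldots\}$ — more concretely, take $S$ to be one parity class of a sublattice so that each non-$S$ vertex sees exactly $m$ of its $2m$ neighbours in $S$; the ``half the neighbours'' structure is exactly what the parity colouring of $\Z^m$ itself gives if we only keep coordinates in a fixed direction, so $S=\{x:x_1+\cdots+x_m\text{ even}\}$ restricted appropriately works, and I would verify $r=m$ directly. For $r=2$, I would look for a periodic pattern with density $\tfrac{2}{2m+2}=\tfrac1{m+1}$; a clean option is $S=\{x:\sum_i i x_i\equiv 0\text{ or }1\pmod{2m+2}\}$ or a union of two translates of the $r=1$ construction chosen so that the covered-counts add without overlap — this is the one case where I'd need to fiddle with the modulus to make the count come out to exactly $2$ everywhere.

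For part (2), given an independent exact $r$-cover $S\subseteq\Z^m$, the plan is to build an independent exact $\lambda r$-cover of $\Z^{\lambda m}$ by writing $\Z^{\lambda m}=(\Z^m)^\lambda$ (grouping the $\lambda m$ coordinates into $\lambda$ blocks of $m$) and taking
\[
S'=\{(z^{(1)},\ldots,z^{(\lambda)})\in(\Z^m)^\lambda : z^{(j)}\in S\text{ for all }j\}.
\]
That is, $S'=S\times S\times\cdots\times S$. Independence of $S'$ is clear: any edge of $\Z^{\lambda m}$ changes exactly one coordinate, hence lies inside a single block, so two adjacent vertices of $S'$ would give two adjacent vertices of $S$ in that block, contradicting independence of $S$. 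For the cover condition, take $w=(w^{(1)},\ldots,w^{(\lambda)})\notin S'$, so $w^{(j_0)}\notin S$ for at least one block $j_0$. A neighbour of $w$ in $S'$ must agree with $w$ in every coordinate outside some single block $j$, and lie in $S$ in every block; this forces $w^{(j)}\in S$ for every $j\neq j_0$ actually — wait, more carefully: the neighbour differs from $w$ in exactly one coordinate, say inside block $j$, so it equals $w$ on all blocks $\neq j$; for it to be in $S'$ we need $w^{(i)}\in S$ for all $i\neq j$ and the modified $j$-th block in $S$. Hence neighbours of $w$ in $S'$ exist only in blocks $j$ for which $w^{(i)}\in S$ for all $i\neq j$, and within such a block the number of choices is exactly the number of neighbours of $w^{(j)}$ in $S$, which is $r$ if $w^{(j)}\notin S$ and $0$ if $w^{(j)}\in S$ (since $S$ is independent). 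Summing over the valid blocks, the total is $\lambda r$ precisely when $w\notin S'$ in \emph{exactly one} block — so I need to handle the vertices of $S'^c$ lying outside $S$ in two or more blocks. For those, no block $j$ satisfies ``$w^{(i)}\in S$ for all $i\neq j$'', so they'd be covered $0$ times, which is wrong.

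So the $S\times\cdots\times S$ construction as stated is \textbf{not} quite an exact cover, and fixing this is the main obstacle. The repair is to enlarge $S'$: instead of requiring all blocks in $S$, take $S'$ to be the set of $w$ lying in $S$ in \emph{at least} $\lambda-1$ blocks, or better, use the ``$r$-cover'' analogue of a product code. Concretely I expect the right object is
\[
S' = \{w : w^{(j)}\in S\text{ for all }j\} \cup \{w : w^{(j)}\in N_S(\cdot)\text{-structure}\},
\]
i.e. one should think of $S$ together with its exact $r$-cover as a partition-type structure on $\Z^m$ and take the $\lambda$-fold analogue. The cleanest formulation: let $c(z)\in\{\text{``in }S\text{''}\}\cup S$ record, for each $z\in\Z^m$, either that $z\in S$ or which... — actually the soundest approach is to \emph{not} use a direct product but to iterate the $\lambda=2$ case $\lambda-1$ times, and for $\lambda=2$ define $S'\subseteq\Z^m\times\Z^m$ by $S'=(S\times\Z^m)\cap(\Z^m\times S)$ — no, that is the product again. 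The honest plan: I will construct the $\lambda=2$ (doubling $m\to 2m$, $r\to 2r$) case by hand using that $S$ being an exact $r$-cover means $\Z^m$ is partitioned by the radius-structure, then take a diagonal-type union in $\Z^{2m}$ so that a vertex outside $S'$ in $k$ of its component blocks receives exactly $kr\cdot(\text{something})=2r$ contributions — forcing the construction to weight things so only $k=1$ occurs, which is achieved by putting into $S'$ all $w$ with $w^{(j)}\in S$ for \emph{all but at most one} $j$ together with those ``second-layer'' points at the right density. Verifying that this enlarged $S'$ is still independent and hits the count $2r$ uniformly is the routine-but-delicate computation I would carry out, and then iterate.
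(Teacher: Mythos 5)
There is a genuine gap, and you identified its location yourself: in part (2) the product construction $S'=S\times\cdots\times S$ (grouping $\Z^{\lambda m}$ into $\lambda$ blocks of length $m$) is not an exact cover, because any $w$ lying outside $S$ in two or more blocks has no neighbour in $S'$ at all, and your attempted repairs are never carried out — the proposal ends with a promise of a ``routine-but-delicate'' enlargement of $S'$ that is not specified, let alone verified. The missing idea is to group the coordinates the other way and to contract rather than take a product: split the $\lambda m$ coordinates into $m$ blocks of length $\lambda$ and define $f:\Z^{\lambda m}\to\Z^m$ by letting the $k$th coordinate of $f(x)$ be the sum of the coordinates in the $k$th block; then set $S'=f^{-1}(S)$. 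Changing one coordinate of $x$ by $\pm1$ changes exactly one coordinate of $f(x)$ by $\pm1$, so independence of $S$ gives independence of $S'$; and for $y\notin S'$, each of the $r$ neighbours of $f(y)$ in $S$ lifts to exactly $\lambda$ neighbours of $y$ in $S'$ (one for each of the $\lambda$ coordinates in the relevant block), and every neighbour of $y$ in $S'$ arises this way, so $y$ has exactly $\lambda r$ neighbours in $S'$. This completely avoids the ``covered zero times'' problem that kills the product.

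Part (1) also has unresolved cases. Your $r=0$, $r=1$ and $r=2m$ constructions are fine (and the $r=1$ set $\{x:\sum_i ix_i\equiv 0\ (\mathrm{mod}\ 2m+1)\}$ is exactly the one used in the paper), but for $r=2$ your candidate $\{x:\sum_i ix_i\equiv 0\text{ or }1\ (\mathrm{mod}\ 2m+2)\}$ is not independent: a point with weighted sum $\equiv 0$ and the point obtained by adding $1$ to its first coordinate (weighted sum $\equiv 1$) are adjacent and both in the set. The clean choice is $\{x:\sum_i ix_i\equiv 0\ (\mathrm{mod}\ m+1)\}$: adjacent points have weighted sums differing by some $j$ with $1\leq j\leq m$, so the set is independent, and a point with nonzero residue $r$ has exactly two neighbours in the set, obtained by decreasing coordinate $r$ by $1$ or increasing coordinate $m+1-r$ by $1$. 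For $r=m$ your parity-based sketch does not work as stated (one parity class of $\Z^m$ is precisely the exact $2m$-cover, and ``restricted appropriately'' is never made concrete); the correct construction is modulo $3$: take $\{x:\sum_i x_i\equiv 0\ (\mathrm{mod}\ 3)\}$, so that points with residue $1$ are covered by exactly their $m$ ``decrease a coordinate'' neighbours and points with residue $2$ by exactly their $m$ ``increase a coordinate'' neighbours, while adjacency changes the sum by $\pm1$ and hence preserves independence.
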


\begin{proof}
\begin{enumerate}
\item For $r=0$, the set $S=\emptyset$ is an independent exact $r$-cover.

For $r=1$, the set 
\[
S_1=\{(x_1,\ldots,x_m)\in\Z^m : \sum_{i=1}^m ix_i\equiv 0 \mod (2m+1)\}
\]
is an independent exact $1$-cover. Suppose that $x\in S_1$ and $y\in\Z^m$ differs from $x$ by changing coordinate $j$ by $\pm1$. Then $\sum_{i=1}^m ix_i-\sum_{i=1}^m iy_i=\pm j$ and so we cannot have $y\in S_1$. It follows that $S_1$ is an independent set. Also, if $y\in \Z^m\setminus S_1$ then $\sum_{i=1}^m iy_i\equiv r \mod (2m+1)$ with $r\not=0$. If $r\in\{1,2,\ldots, m\}$ then there is exactly one neighbour of $y$ in $S_1$ obtained by decreasing the $r$th coordinate of $y$ by $1$. If $r\in\{-1,-2,\ldots, -m\}$ then again there is exactly one neighbour of $y$ in $S_1$, this time obtained by increasing the $-r$th coordinate of $y$ by $1$.

For $r=2$, the set 
\[
S_2=\{(x_1,\ldots,x_m)\in\Z^m : \sum_{i=1}^m ix_i\equiv 0 \mod (m+1)\}
\]
is an independent exact $1$-cover. Suppose that $x\in S_2$ and $y\in\Z^m$ differs from $x$ by changing coordinate $j$ by $\pm1$. Then $\sum_{i=1}^m ix_i-\sum_{i=1}^m iy_i=\pm j$ and so we cannot have $y\in S_2$. It follows that  $S_2$ is an independent set. Also, if $y\in \Z^m\setminus S_2$ then $\sum_{i=1}^m iy_i\equiv r \mod (m+1)$ with $r\not=0$. Now there are exactly two neighbours of $y$ in $S_2$ obtained by decreasing the $r$th coordinate of $y$ by $1$ or increasing the $(m+1-r)$th coordinate of $y$ by $1$.

For $r=m$, the set 
\[
S_m=\{(x_1,\ldots,x_m)\in\Z^m : \sum_{i=1}^m x_i\equiv 0 \mod 3\}
\]
is an independent exact $m$-cover. It is easy to see that $S_m$ is an independent set. If $y\in \Z^m\setminus S_m$ then either $\sum_{i=1}^m y_i\equiv 1 \mod 3$ in which case the $m$ neighbours obtained by decreasing any coordinate by $1$ are all in $S_m$, or $\sum_{i=1}^m y_i\equiv -1 \mod 3$ in which case the $m$ neighbours obtained by increasing any coordinate by $1$ are all in $S_m$.

Finally, for $r=2m$, it is easy to see that the set
\[
S_m=\{(x_1,\ldots,x_m)\in\Z^m : \sum_{i=1}^m x_i\equiv 0 \mod 2\}
\]
is an independent exact $m$-cover.
\item Suppose that $S$ is an independent exact $r$-cover of $\Z^m$. Let $f:\Z^{\lambda m}\rightarrow\Z^m$ be defined by
\[
f(x_1,\ldots,x_{\lambda m})=(s_1,\ldots,s_m)
\]
where $s_k=\sum_{i=\lambda(k-1)+1}^{\lambda k}x_i$. In other words, we split the coordinates of $\Z^{\lambda m}$ into $m$ blocks of length $\lambda$ and let the $i$th coordinate of the image of $x$ be the sum of the coordinates in its $i$th block.

Now, define $S'\subseteq\Z^{\lambda m}$ by
\[
S'=f^{-1}(S)=\{x\in\Z^{\lambda m} : f(x)\in S\}.
\]
It is easy to see that $S'$ is an independent set. Now suppose that $y\in\Z^{\lambda m}\setminus S'$. Then $f(y)\in\Z^m\setminus S$ and so there are exactly $r$ neighbours of $f(y)$ in $S$. Each such neighbour gives rise to $\lambda$ neighbours of $y$ in $\Z^{\lambda m}$. For instance if increasing coordinate $i$ of $f(y)$ by $1$ yields an element of $S$ then increasing any of the coordinates $(i-1)\lambda+1,(i-1)\lambda+2,\ldots i\lambda$ of $y$ by $1$ yields an element of $S'$. Moreover each neighbour of $y$ in $S$ arises in this way and so $y$ has exactly $\lambda r$ neighbours in $S'$. It follows that $S'$ is an independent exact $\lambda r$-cover of $\Z^{\lambda m}$.
\end{enumerate}
\end{proof}

Using the constructions in Theorem \ref{cover-construction} and the connection between exact $r$-covers and $P_2(R,B)$ in Theorem \ref{cover} we obtain the following corollary.

\begin{cor}\label{construction}
Let $X_{2m}$ be the symmetric convex hull:
\[
\sch(\left\{\left(\frac{l}{2m},\left(\frac{l}{2m}\right)^2\right) : l=0,m,2m-2,2m-1,2m\right\}
\]
Then
\[
X_{2m}\subseteq\Pcal(\T^m)\subseteq D_{2m}
\]
\end{cor}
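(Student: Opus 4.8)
The plan is to assemble Corollary \ref{construction} directly from the three results already proved. The inclusion $\Pcal(\T^m)\subseteq D_{2m}$ is immediate: $T^m_k$ is $2m$-regular with an even number of vertices, so Theorem \ref{main:thm} gives $P_2(R,B)\in D_{2m}$ for every balanced colouring, and since $D_{2m}$ is closed this passes to the limit defining $\Pcal(\T^m)$. So the substance is the inclusion $X_{2m}\subseteq\Pcal(\T^m)$.

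For that, first I would invoke Theorem \ref{cover-construction}(1) to get independent exact $r$-covers of $\Z^m$ for each $r\in\{0,1,2,m,2m\}$. Feeding each of these into Theorem \ref{cover} shows that the point $\left(\left(\frac{2m-r}{2m}\right)^2,\frac{2m-r}{2m}\right)$ lies in $\Pcal(\T^m)$ for each such $r$. Writing $l=2m-r$, as $r$ runs over $\{0,1,2,m,2m\}$ the value $l$ runs over $\{2m,2m-1,2m-2,m,0\}$, which is exactly the index set in the definition of $X_{2m}$; thus each point $\left(\frac{l}{2m},\left(\frac{l}{2m}\right)^2\right)$ with $l\in\{0,m,2m-2,2m-1,2m\}$ is in $\Pcal(\T^m)$. (The cases $l=0$ and $l=2m$ correspond to the trivial colourings — empty cover and the full even-parity set — and could also be checked by hand, e.g. $l=2m$ is the point $(1,1)$ attained by a near-half-space colouring, and $l=0$ is $(0,0)$ attained by a checkerboard-type colouring.)

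Next I would use the colour-swap symmetry: if $(x,y)\in\Pcal(\T^m)$ then $(y,x)\in\Pcal(\T^m)$, since reversing the roles of $R$ and $B$ in any balanced colouring exchanges $P_2(R)$ and $P_2(B)$. Hence all the reflected points $\left(\left(\frac{l}{2m}\right)^2,\frac{l}{2m}\right)$ are in $\Pcal(\T^m)$ as well, so $\Pcal(\T^m)$ contains the full generating set $\left\{\left(\frac{l}{2m},\left(\frac{l}{2m}\right)^2\right)\right\}\cup\overline{\left\{\left(\frac{l}{2m},\left(\frac{l}{2m}\right)^2\right)\right\}}$ for $l\in\{0,m,2m-2,2m-1,2m\}$. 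Finally, by Theorem \ref{convex:thm} the set $\Pcal(\T^m)$ is convex, so it contains the convex hull of this generating set, which is precisely $X_{2m}=\sch\left(\left\{\left(\frac{l}{2m},\left(\frac{l}{2m}\right)^2\right):l=0,m,2m-2,2m-1,2m\right\}\right)$. This gives $X_{2m}\subseteq\Pcal(\T^m)$ and completes the proof.

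There is essentially no obstacle here — the corollary is a bookkeeping assembly of Theorems \ref{main:thm}, \ref{convex:thm}, \ref{cover}, and \ref{cover-construction}. The only point requiring a moment's care is matching the index substitution $l=2m-r$ correctly and confirming that $\{2m-r:r\in\{0,1,2,m,2m\}\}=\{0,m,2m-2,2m-1,2m\}$, together with noting explicitly that $D_{2m}$ (and hence the containment in it) is closed so survives the $\epsilon\to 0$ limit in the definition of $\Pcal$.
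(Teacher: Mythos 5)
Your proposal is correct and is exactly the assembly the paper intends: the corollary is stated as an immediate consequence of Theorems \ref{main:thm}, \ref{convex:thm}, \ref{cover} and \ref{cover-construction}, combined with the colour-swap symmetry, which is precisely what you do. Your explicit remarks on the substitution $l=2m-r$ and on the closedness of $D_{2m}$ are fine and add nothing controversial.
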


For $[k]^2$ we have an exact result.

\begin{thm}\label{2d-torus}
\[
\Pcal(\T^2)=\sch\left(\left\{\left(0,0\right),\left(\frac{1}{2},\frac{1}{4}\right),\left(\frac{3}{4},\frac{9}{16}\right),\left(1,1\right)\right\}\right)
\]
\end{thm}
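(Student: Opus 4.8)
The plan is to establish the two inclusions $\Pcal(\T^2)\supseteq\sch(\{(0,0),(\tfrac12,\tfrac14),(\tfrac34,\tfrac9{16}),(1,1)\})$ and $\Pcal(\T^2)\subseteq\sch(\{(0,0),(\tfrac12,\tfrac14),(\tfrac34,\tfrac9{16}),(1,1)\})$ separately, the first by construction and the second by an extremal argument. Since $\Pcal(\T^2)$ is convex by Theorem \ref{convex:thm}, for the lower bound it suffices to exhibit colourings realising (asymptotically) each of the six extreme points $(0,0),(\tfrac12,\tfrac14),(\tfrac34,\tfrac9{16}),(1,1)$ and their reflections. The points $(0,0)$ and $(1,1)$ are trivial (a perfect independent set / a "half red, half blue" split of $[k]^2$ into two rectangles), and $(\tfrac12,\tfrac14)$, $(\tfrac34,\tfrac9{16})$ — equivalently $\big((\tfrac{2m-r}{2m})^2,\tfrac{2m-r}{2m}\big)$ with $m=2$ and $r=2$, $r=1$ respectively — follow from Theorem \ref{cover} together with the $r=1$ and $r=2$ cases of Theorem \ref{cover-construction}(1). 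So the lower bound is essentially already in hand; I would just assemble these pieces and invoke convexity.

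The substance is the upper bound. By the reflection symmetry of $\Pcal(\T^2)$ and by convexity, it is enough to show that every balanced colouring of $[k]^2$ has $P_2(R,B)=(x,y)$ lying below the upper boundary, i.e. the piecewise-linear curve through $(0,0),(\tfrac14,\tfrac12),(\tfrac9{16},\tfrac34),(1,1)$; equivalently, in the regime $x\le y$ we must rule out the part of $D_4$ between the curves $y=x^2$-type bound and this polygon. Concretely I would fix $\alpha=1-\tfrac{2\partial(R)}{4k^2}$ as in the proof of Theorem \ref{main:thm}, so that $P_2(R)$ and $P_2(B)$ are both controlled by the degree-distributions $(r_i)_{i=0}^4$ and $(b_i)_{i=0}^4$ with $\sum i r_i=\sum i b_i=2\alpha k^2$. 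The point is that the generic bound from Theorem \ref{main:thm} is not tight for the grid because the $r_i$ cannot be chosen freely: a vertex of internal degree $3$ or $4$ in $R$ forces its neighbours to be red, which in turn constrains how $B$ can be arranged, and in two dimensions "most" vertices of $R$ and of $B$ must have internal degree exactly $2$ unless $R$ (or $B$) breaks into essentially one-dimensional strips or near-solid blocks. I would make this precise by a local/counting argument: bound $\sum_{v\in R} d_R(v)^2$ in terms of $\alpha$ and of how much of $R$ is "thick" versus "thin", and show that any attempt to push $P_2(R)$ up towards $\alpha$ (the line $y=x^2$ boundary being replaced by the better polygon) forces $\partial(R)$ up, hence $\alpha$ up, hence $P_2(B)\ge\alpha^2$ up — and the exact trade-off is captured by the three segments of the claimed boundary. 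The key new inequality, I expect, will be something like: for a balanced colouring of the 2-torus, if $P_2(R)=x$ then $P_2(B)$ is at least the value of the upper-boundary polygon at $x$, proved by analysing the possible $(r_i)$ more carefully than convexity alone allows (e.g. using that $4r_4+3r_3+\dots$ and the isoperimetric geometry of the grid jointly limit $\sum i^2 r_i$).

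The main obstacle will be exactly this sharpened inequality for $P_2$ on the 2-torus: translating "the grid is 2-dimensional, so high-internal-degree vertices are expensive" into a clean bound on $\sum_i i^2 r_i$ given $\sum_i i r_i$ and given the corresponding data for $B$. I anticipate the argument splits according to which segment of the boundary is being approached — near $(1,1)$ one is in the "two big blocks" regime and the edge-isoperimetry of the grid (Bollobás–Leader) does the work; near $(\tfrac14,\tfrac12)$ and $(\tfrac9{16},\tfrac34)$ one is in a regime where $R$ looks like a union of parallel strips of width $1$ or $2$, and one must show no mixture does better than the convex combination of these model configurations. A convenient packaging is to define, for each vertex, its contribution and then optimise the linear functional $(P_2(R),P_2(B))$ over the (finite-dimensional, as $k\to\infty$) set of achievable asymptotic "profiles" of colourings, showing the optimum is always attained at one of the six named profiles. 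I would conclude by checking that each extreme point of the claimed region is matched by the constructions of the first part, so the two inclusions coincide.
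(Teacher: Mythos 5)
Your lower bound is fine and matches the paper: $(0,0)$, $(1,1)$ are easy, and $\left(\frac{1}{2},\frac{1}{4}\right)$, $\left(\frac{3}{4},\frac{9}{16}\right)$ come from Theorem \ref{cover} with the $r=2$ and $r=1$ covers of Theorem \ref{cover-construction}, followed by convexity (Theorem \ref{convex:thm}). But the upper bound, which is the entire substance of the theorem, is left as a plan rather than a proof, and the plan is both vaguer and broader than what is actually needed. First, note the target precisely: Theorem \ref{main:thm} already gives $\Pcal(\T^2)\subseteq D_4$, whose lower boundary has vertices $(0,0),\left(\frac{1}{4},\frac{1}{16}\right),\left(\frac{1}{2},\frac{1}{4}\right),\left(\frac{3}{4},\frac{9}{16}\right),(1,1)$; the claimed region differs from $D_4$ only in deleting $\left(\frac{1}{4},\frac{1}{16}\right)$ and its reflection. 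So there is nothing to prove ``near $(1,1)$'' and no edge-isoperimetric input (Bollob\'as--Leader) is needed anywhere; by convexity and symmetry the whole theorem reduces to one statement: if $\left(\frac{1}{4},y\right)\in\Pcal(\T^2)$ then $y\geq\frac{1}{8}$, i.e.\ $P_2(R)\approx\frac{1}{4}$ forces $P_2(B)\gtrsim\frac{1}{8}$.

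That single inequality is exactly the ``key new inequality'' you say you ``expect'' to find, and you never supply it; phrases like bounding thick versus thin parts of $R$, or optimising over asymptotic profiles and asserting the optimum is at the six named profiles, are restatements of the goal, not arguments. The paper's proof of this step is a concrete local double count: attaining $\left(\frac{1}{4},\frac{1}{16}\right)$ would need almost all of $R$ in $R_0\cup R_4$ and almost all of $B$ in $B_1$, and one shows $B_1$ being large forces many vertices of $R_1\cup R_2\cup R_3$. Specifically, vertices of $B_1$ are split into three types according to the colours around their unique blue neighbour, and a bipartite auxiliary graph $H$ is built between the Type 1/2 vertices of $B_1$ and $R_1\cup R_2\cup R_3$, with local case analysis giving $\deg_H(v)\geq 2$ on one side and $\deg_H(r)\leq 3$ on the other; double counting yields $2(t_1+t_2)\leq 3(r_1+r_2+r_3)$, and combining this with Lemma \ref{squares}, the identity $I(R)=I(B)$, and the bound $t_3\leq 3b_3+4b_4$ gives $P_2(B)\geq\frac{1}{8}+\frac{\eps}{2}$ when $P_2(R)=\frac{1}{4}+\eps$. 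Your alternative route via $\alpha=1-\frac{2\partial(R)}{4k^2}$ and the generic bound $P_2(B)\geq\alpha^2$ cannot by itself produce $y\geq\frac{1}{2}x$ on this segment, since $\alpha^2$ is exactly the bound that Theorem \ref{main:thm} already uses and it permits the excluded point. Until you produce a quantitative lemma of the above kind (or an equivalent), the upper-bound half of your proposal is a gap, not a proof.
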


Note that the difference between the extreme points of $D_4$ and the set $\Pcal(\T^2)$ in the statement of the Theorem is that the latter does not include $(\frac{1}{4},\frac{1}{16})$ or $(\frac{1}{16},\frac{1}{4})$. 

Recall the notation that $R_i$ is the set of all vertices in $R$ which have exactly $i$ neighbours in $R$ (and similarly for $B_i$). In order to attain $(\frac{1}{4},\frac{1}{16})$ we would need to find a colouring in which almost all vertices in $R$ lie in $R_0\cup R_4$ while almost all vertices in $B$ lie in $B_1$. We will show this is impossible by showing that $B_1$ being large forces many vertices in $R_1\cup R_2\cup R_3$.

\begin{proof}
By Theorem \ref{main:thm} we have
\[
\Pcal(\T^2)\subseteq\sch\left(\left\{\left(0,0\right),\left(\frac{1}{4},\frac{1}{16}\right),\left(\frac{1}{2},\frac{1}{4}\right),\left(\frac{3}{4},\frac{9}{16}\right),\left(1,1\right)\right\}\right).
\]
By the constructions of Corollary \ref{construction} when $m=2$, $r=1,2$ we have
\[
\sch\left(\left\{\left(0,0\right),\left(\frac{1}{2},\frac{1}{4}\right),\left(\frac{3}{4},\frac{9}{16}\right),\left(1,1\right)\right\}\right)\subseteq\Pcal(\T^2).
\]
So to prove the Theorem it suffices to show that if $(\frac{1}{4},y)\in\Pcal(\T^2)$ then $y\geq\frac{1}{8}$. 

Let $(R,B)$ be a balanced colouring of $[k]^2$. Suppose that $v\in B_1$. Let $v=(a,b)$ and suppose, without loss of generality, that the single neighbour of $v$ in $B$ is $w=(a,b+1)$. We will split $B_1$ into types as follows. We say that $v$ has:

\begin{itemize}
\item[Type 3:] if $w\in B_3\cup B_4$
\item[Type 1:] if $(a-1,b+1),(a+1,b+1)\in R$  
\item[Type 2:] otherwise
\end{itemize}
Note that if $w\in B_1$ then $v$ certainly has Type $1$, but if $w\in B_2$ then $v$ could be of Type 1 or Type 2. Let 
\[
t_i=|\{v\in B_1 : \text{$v$ has Type $i$}\}|
\]

We define an auxiliary bipartite graph $H$ with bipartition $(X,Y)$ where
\begin{align*}
X&=\{ v\in B_1 : \text{$v$ has Type $1$ or Type $2$}\}\\
Y&=R_1\cup R_2\cup R_3
\end{align*}

The edges of $H$ are of two types. If $v\in X$ and $r\in Y$ then $v$ and $r$ are adjacent in $H$ if:
\begin{itemize}
\item $v$ has Type $1$ and $v$ and $r$ are adjacent in $T_k^2$,
\item $v=(i,j)$ has Type $2$ and $r\in\{(i,j\pm1), (i\pm1,j),(i\pm1,j\pm1)\}$.
\end{itemize}

\begin{claim}
If $v\in X$ then $\deg_H(v)\geq 2$
\end{claim}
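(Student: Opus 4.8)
The plan is to fix $v \in X$ and show it has at least two neighbours in $Y = R_1 \cup R_2 \cup R_3$, splitting according to whether $v$ has Type 1 or Type 2. Write $v = (a,b)$ with its unique blue neighbour $w = (a,b+1)$; then the other three neighbours of $v$, namely $(a-1,b)$, $(a+1,b)$ and $(a,b-1)$, all lie in $R$.

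First consider the case where $v$ has Type 1, so that $(a-1,b+1)$ and $(a+1,b+1)$ both lie in $R$. The idea is to look at the two red vertices $(a-1,b)$ and $(a+1,b)$ and argue that each of them is forced to have at most $3$ red neighbours, hence lies in $R_1 \cup R_2 \cup R_3 = Y$ (it certainly has at least one red neighbour, namely $v$'s position is blue, but $(a-1,b)$ has red neighbour... wait — rather, it has the red neighbour coming from its own surroundings; what we really need is that it is not in $R_0$, which holds since $(a-2,b)$ or $(a-1,b\pm1)$ — hmm). More carefully: $(a-1,b)$ is adjacent to $v=(a,b)$ which is blue, so $(a-1,b)$ has at most $3$ red neighbours, i.e. $(a-1,b) \in R_0 \cup R_1 \cup R_2 \cup R_3$; and it is adjacent to $(a-1,b+1) \in R$, so it has at least one red neighbour and thus lies in $Y$. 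Symmetrically $(a+1,b) \in Y$. These are two distinct neighbours of $v$ in $T_k^2$, and since $v$ has Type 1 the edges $(v,(a-1,b))$ and $(v,(a+1,b))$ are present in $H$, giving $\deg_H(v) \ge 2$.

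Now suppose $v$ has Type 2, so (negating Type 1) at least one of $(a-1,b+1), (a+1,b+1)$ is blue — say $(a-1,b+1) \in B$ (the other case is symmetric) — and also $w \in B_1 \cup B_2$ (since $v \notin$ Type 3). Here the candidate neighbours in $H$ range over the $8$ points $\{(a,b\pm1),(a\pm1,b),(a\pm1,b\pm1)\}$, so we have more room. The three points $(a-1,b),(a+1,b),(a,b-1)$ are red; as in the Type 1 analysis, $(a-1,b)$ is adjacent to the blue vertex $v$ so has at most $3$ red neighbours, and I claim at least one of $(a-1,b),(a+1,b),(a,b-1)$ actually lies in $R_1\cup R_2\cup R_3$ and that in fact two of the eight surrounding points do — one should exploit that $w$ has few blue neighbours (it is in $B_1\cup B_2$) to force its red neighbours, among them $(a-1,b+1)$ or $(a+1,b+1)$ style vertices... and that $(a-1,b+1)\in B$ together with $w\in B$ means the red vertex $(a-1,b)$ sees blue vertices $v$ and $(a-1,b+1)$, hence has at most $2$ red neighbours, so $(a-1,b)\in R_0\cup R_1\cup R_2$. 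The cleanest route is: show $(a-1,b)\in Y$ and $(a,b-1)\in Y$ (or $(a+1,b)\in Y$), handling the possibility that one of them is in $R_0$ by a short local case check using that $w$'s blue-degree is small.

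The main obstacle is the Type 2 case: unlike Type 1, we are not handed two red neighbours of $v$ that both visibly see a second red vertex, so we must work to rule out the annoying scenario in which the red neighbours of $v$ all lie in $R_0$ (isolated red points). I expect to eliminate this using the constraint $w \in B_1 \cup B_2$ — an isolated red point adjacent to $v$ would force its other three neighbours blue, and chasing these blue vertices around $w$ should contradict $w$ having at most $2$ blue neighbours, or else directly exhibit the needed two $H$-edges among the eight surrounding points. Everything else is a routine local parity/degree count on the grid.
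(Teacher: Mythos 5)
Your Type 1 case is complete and is essentially the paper's argument: the two red vertices $(a-1,b)$ and $(a+1,b)$ each see the blue vertex $v$ and a red vertex ($(a-1,b+1)$, resp.\ $(a+1,b+1)$), so both lie in $Y$ and both are $H$-neighbours of $v$. But the Type 2 case, which you yourself flag as ``the main obstacle'', is not actually proved: you only sketch a plan (``I expect to eliminate this using the constraint $w\in B_1\cup B_2$\ldots'', ``a short local case check'') without carrying it out, and the route you gesture at --- showing that the red neighbours $(a-1,b),(a+1,b),(a,b-1)$ of $v$ are not all in $R_0$ --- is exactly where the work lies, since e.g.\ $(a,b-1)$ or the red neighbour on the blue-diagonal side genuinely can be in $R_0$.

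The missing idea is to look at the \emph{red} diagonal vertex rather than the blue one. In Type 2, since $w\notin B_3\cup B_4$ exactly one of $(a-1,b+1),(a+1,b+1)$ is red; with your labelling say $(a+1,b+1)\in R$ and $(a-1,b+1)\in B$. Then the pair $(a+1,b+1)$ and $(a+1,b)$ does everything at once: they are both red and adjacent to each other, so each has at least one red neighbour and neither is in $R_0$; and $(a+1,b+1)$ is adjacent to the blue vertex $w$ while $(a+1,b)$ is adjacent to the blue vertex $v$, so each has at most $3$ red neighbours. Hence both lie in $Y=R_1\cup R_2\cup R_3$, and both are among the eight points $\{(a,b\pm1),(a\pm1,b),(a\pm1,b\pm1)\}$ that define $H$-edges for a Type 2 vertex, giving $\deg_H(v)\geq 2$ with no further case analysis and no use of $w\in B_1\cup B_2$ beyond the fact (already noted) that not both diagonal vertices are blue. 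This is precisely the paper's Type 2 argument (stated there with the mirror-image labelling $(a-1,b+1)\in R$), so as it stands your proposal has a genuine gap in the Type 2 case rather than an alternative proof.
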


\begin{proof}[Proof of Claim 1.]
If $v$ has Type $1$ then we know $(a-1,b+1),(a+1,b+1)\in R$. Hence both $(a+1,b)$ and $(a-1,b)$ have at least one neighbour in $R$ and one neighbour in $B$ and so are elements of $Y$. It follows that $v$ is adjacent to both $(a+1,b)$ and $(a-1,b)$ in $H$ and so $\deg_H(v)\geq 2$.

If $v$ has Type $2$ then without loss of generality $(a-1,b+1)\in R, (a+1,b+1)\in B$ (if both these vertices were in $B$ then $v$ would have Type $3$). Hence both $(a-1,b+1)$ and $(a-1,b)$ are in $R$ and have at least one neighbour in $R$ and one neighbour in $B$, and so are elements of $Y$. It follows that $v$ is adjacent to both $(a-1,b+1)$ and $(a-1,b)$ in $H$ and so $\deg_H(v)\geq 2$.
\end{proof}

\begin{claim}
If $r\in Y$ then $\deg_H(r)\leq 3$
\end{claim}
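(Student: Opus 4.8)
The plan is to fix $r=(0,0)\in Y$, so that $r\in R_1\cup R_2\cup R_3$ and hence $1\le \deg_R(r)\le 3$, and to bound the $H$-neighbours of $r$ by their position. Any $v$ with $(v,r)\in E(H)$ lies in $X$ and is either a grid-neighbour of $r$ (of Type $1$ or Type $2$) or a diagonal neighbour of $r$, i.e.\ one of $(\pm1,\pm1)$, of Type $2$; this is because a Type-$1$ vertex contributes an $H$-edge only to a grid-adjacent vertex while a Type-$2$ vertex contributes to each of its king-neighbours. Let $a$ be the number of grid-neighbours of $r$ lying in $X$ (these are all blue since $X\subseteq B_1$). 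Among the Type-$2$ diagonal neighbours of $r$ that lie in $X$, call one \emph{inward} if its unique blue neighbour $w$ is a grid-neighbour of $r$, and \emph{outward} otherwise, and let $c_I,c_O$ be the respective counts, so $\deg_H(r)=a+c_I+c_O$. The two main steps are: $a+c_I\le 4-\deg_R(r)$, and, whenever $c_O\ge 1$, $c_O\le \deg_R(r)-1$.

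For the first step I would use that the unique blue neighbour $w$ of any Type-$2$ vertex $v$ lies in $B_2$: it is not in $B_0$ (it has the blue neighbour $v$), not in $B_1$ (else $v$ would be Type $1$), and not in $B_3\cup B_4$ (else $v$ would be Type $3$). So for an inward diagonal $v$ the vertex $w$ is a blue grid-neighbour of $r$ lying in $B_2$, in particular $w\notin X$. Moreover two distinct inward diagonals cannot have the same blue neighbour $N$: the two diagonal neighbours of $r$ adjacent to a fixed grid-neighbour are, up to a symmetry of the square, $(1,1)$ and $(1,-1)$ with $N=(1,0)$, and applying the Type-$2$ condition to $(1,1)$ with $w=N$ forces the flank $(2,0)$ of $N$ to be blue (its opposite flank being $r$, which is red); together with the two diagonals this would give $N$ at least three blue neighbours, contradicting $N\in B_2$. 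Hence $v\mapsto w$ is an injection from the inward diagonals into the set $B^{*}$ of blue grid-neighbours of $r$, and its image is disjoint from the set of grid-neighbours of $r$ lying in $X$ (which are in $B_1$). Therefore $a+c_I\le |B^{*}|=4-\deg_R(r)$.

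For the second step, if $v$ is an outward diagonal, say $v=(1,1)$, then its only blue neighbour is $(2,1)$ or $(1,2)$, so the two grid-neighbours of $r$ adjacent to $v$, namely $(1,0)$ and $(0,1)$, are both red. Regard the four diagonal neighbours of $r$ as the vertices of a $4$-cycle whose edges are the four grid-neighbours of $r$; then each outward diagonal reddens its two incident grid-neighbours. If two outward diagonals were opposite in this $4$-cycle, all four grid-neighbours of $r$ would be red, contradicting $\deg_R(r)\le 3$; so the outward diagonals are pairwise adjacent in the $4$-cycle, whence $c_O\le 2$, and furthermore $c_O=1$ forces $\deg_R(r)\ge 2$ while $c_O=2$ forces $\deg_R(r)\ge 3$. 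Combining the two steps: if $c_O\ge 1$ then $\deg_H(r)=a+c_I+c_O\le(4-\deg_R(r))+(\deg_R(r)-1)=3$, and if $c_O=0$ then $\deg_H(r)=a+c_I\le 4-\deg_R(r)\le 3$.

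I expect the outward diagonals to be the main obstacle: without them the bound follows immediately from counting blue grid-neighbours, and the real content is to isolate them and charge each against a red grid-neighbour of $r$. Getting this bookkeeping right, and carefully checking from the Type-$2$ local picture the two geometric facts above (at most one inward diagonal per grid-neighbour, and an outward diagonal reddens two grid-neighbours), is where care is needed. Throughout one may take $k$ large enough that the $3\times3$ block around $r$ sits inside $T_k^2$ without wrap-around, which is harmless since only the asymptotic set $\Pcal(\T^2)$ is at stake.
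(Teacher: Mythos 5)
Your argument is correct; each step checks out. The observation that the unique blue neighbour $w$ of a Type-2 vertex lies in $B_2$ is exactly right, the injection from inward diagonals to blue grid-neighbours of $r$ works (two inward diagonals sharing the same $w$ would, via the flank condition, give $w$ three blue neighbours, putting $w\in B_3\cup B_4$ and making those diagonals Type 3), and an outward diagonal indeed forces both of its incident grid-neighbours of $r$ to be red. The bookkeeping $a+c_I\le 4-\deg_R(r)$ together with $c_O\le \deg_R(r)-1$ when $c_O\ge 1$ (and $\deg_R(r)\ge 1$ when $c_O=0$) then yields $\deg_H(r)\le 3$. The paper takes a different route: it assumes $\deg_H(r)\ge 4$ for contradiction, first proves the exclusion that a Type-2 diagonal $H$-neighbour forces the adjacent grid-neighbour of $r$ in its row out of $B_1$, deduces that the only possible $4$-element $H$-neighbourhoods are the four grid-neighbours or the four diagonals, and rules these out because they force $r\in R_0$ or $r\in R_4$ respectively, contradicting $r\in Y=R_1\cup R_2\cup R_3$. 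So your proof is a genuinely different decomposition: a direct charging argument that distributes the $H$-neighbours over the blue and red grid-neighbours of $r$, rather than a case-elimination of extremal configurations. Your version gives slightly finer information (e.g.\ the bound $4-\deg_R(r)$ when there are no outward diagonals), at the cost of the inward/outward bookkeeping; the paper's version needs only one exclusion lemma plus a short configuration check. Both ultimately rest on the same local facts about the Type definitions, and your closing remark about taking $k$ large enough that the $3\times 3$ block has nine distinct vertices is a harmless assumption the paper also makes implicitly.
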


\begin{proof}[Proof of Claim 2.]
Suppose that $r=(i,j)\in Y$. We need to show that among the $8$ points $\{(i,j\pm1), (i\pm1,j),(i\pm1,j\pm1)\}$ at most $3$ of them are elements of $X$ which are adjacent to $(i,j)$ in $H$. We will do this by first showing that we cannot have two such elements of $X$ which are adjacent (in $T_k^2$). We then rule out the two remaining $4$ point configurations.  

Suppose, for a contradiction, that both $(i+1,j)$ and $(i+1,j+1)$ are adjacent to $(i,j)$ in $H$. We must have that $(i+1,j)$ and $(i+1,j+1)$ are both in $B_1$ and so have no other neighbours in $B$. Hence both $(i+1,j)$ and $(i+1,j+1)$ are Type $1$ vertices. Now $(i+1,j+1)$ cannot be adjacent to $(i,j)$ in $H$ because of the definition of the edges of $H$ involving type 1 vertices. 

It follows that the only ways $(i,j)$ can have at least $4$ neighbours in $H$ are for these neighbours to be
\begin{itemize}
\item $(i\pm 1,j), (i,j\pm 1)$

or

\item $(i\pm 1,j\pm 1)$
\end{itemize}
In the first case we have $(i,j)\in R_0$. A contradiction since we must have $(i,j)\in Y=R_1\cup R_2\cup R_3$. In the second case, note that $(i+1,j)$ must be in $R$; if it were in $B$ then $(i+1,j+1)$ would have either Type 1 (if $(i+2,j)\in R$) or Type 3 (if $(i+2,j)\in B$) contradicting that $(i+1,j+1)$ is a neighbour of $(i,j)$ in H. Similarly, $(i-1,j),(i,j+1),(i,j-1)\in R$ and so $(i,j)\in R_4$. Again, this contradicts the definition of $Y$. This establishes the second claim.
\end{proof}

Using these two claims to double count the edges of $H$ we get:
\[
2(t_1+t_2)\leq E(H)\leq 3(r_1+r_2+r_3).
\]

Let $n=k^2$. Suppose that $P_2(R)=\frac{1}{4}+\eps$ then by Lemma \ref{squares}:
\[
\frac{1}{4}+\eps=\frac{1}{8n}\left(r_1+4r_2+9r_3+16r_4\right)
\]
and so
\[
2n+8\eps n=r_1+4r_2+9r_3+16r_4.
\]
We also have 
\[
I(R)=r_1+2r_2+3r_3+4r_4.
\]
Hence we obtain
\begin{equation}\label{pr}
2n+8\eps n=4I(R)-(3r_1+4r_2+3r_3)\leq 4I(R)-3(r_1+r_2+r_3)\leq 4I(R)-2(t_1+t_2)
\end{equation}

Now, consider $P_2(B)$. We have that
\[
8nP_2(B)=b_1+4b_2+9b_3+16b_4.
\]
Using the fact that $I(R)=I(B)=b_1+2b+2+3b_3+4b_4$ we can eliminate $b_2$ from this obtaining 
\begin{align*}
8nP_2(B)&=b_1+2\left(I(R)-b_1-3b_3-4b_4\right)+9b_3+16b_4\\
&=2I(R)-b_1+3b_3+8b_4.
\end{align*}
We know that $b_1=t_1+t_2+t_3$. We also know that $3b_3+8b_4\geq 3b_3+4b_4\geq t_3$ because each Type $3$ vertex in $B_3$ is adjacent to a vertex in $B_3$ or $B_4$. Using these bounds in the expression for $8nP_2(B)$ above gives
\begin{align*}
8nP_2(B)&\geq 2I(R)-(t_1+t_2)\\
&\geq n +4\eps n
\end{align*}
by equation (\ref{pr}). Finally, we obtain
\[
P_2(B)\geq\frac{1}{8}+\frac{\eps}{2}
\]
\end{proof}

Turning to larger $m$, we can give a similar but cruder non-existence argument for independent exact $r$-covers of $\Z^m$ when $m<r<2m$. 

\begin{thm}\label{no-exact-cover:thm}
For any $m\geq 2$. If $m<r<2m$ there is no independent exact $r$-cover of $\Z^m$.
\end{thm}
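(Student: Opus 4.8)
The plan is to reduce the statement to a purely local fact about $\Z^m$ and then derive a contradiction by a one-line double count around a single edge.

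\textbf{Step 1: Turn the exact-cover hypothesis into regularity of the complement.} Suppose $S$ is an independent exact $r$-cover of $\Z^m$ with $m<r<2m$, and set $T=\Z^m\setminus S$ and $t:=2m-r$, so $1\le t\le m-1$. Since $S$ is independent, every neighbour of a vertex of $S$ lies in $T$; and since $S$ is an exact $r$-cover, every $v\in T$ has exactly $r$ neighbours in $S$ and hence exactly $2m-r=t$ neighbours in $T$. Thus $T$ induces a $t$-regular subgraph of $\Z^m$. Also $T\neq\emptyset$ (otherwise $S=\Z^m$ would be independent, which is false as $\Z^m$ has edges), and since $t\ge 1$ every vertex of $T$ has a neighbour in $T$.

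\textbf{Step 2: The local count.} Fix $v\in T$; after relabelling the coordinates and possibly flipping a sign we may assume $v+e_1\in T$. The key observation is that for every direction $i\neq 1$ and every sign $\epsilon\in\{+1,-1\}$, the two vertices $v+\epsilon e_i$ and $v+e_1+\epsilon e_i$ are adjacent in $\Z^m$, so \emph{at most one} of them lies in $S$. Now count $S$-neighbours of $v$ in the $m-1$ directions other than $1$: the direction-$1$ slots of $v$ are $v+e_1$ (which is in $T$) and $v-e_1$ (at most one $S$-vertex), so $v$ has at least $(2m-t)-1=2m-t-1$ neighbours in $S$ among directions $\neq 1$. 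By the identical argument applied to $v+e_1$ (whose direction-$1$ slot $v=(v+e_1)-e_1$ lies in $T$), the vertex $v+e_1$ also has at least $2m-t-1$ neighbours in $S$ among directions $\neq 1$.

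\textbf{Step 3: Double count and conclude.} There are $2(m-1)$ positions $(\epsilon,i)$ with $i\neq 1$, and by the key observation each such position can contribute to at most one of the two counts from Step 2. Hence
\[
(2m-t-1)+(2m-t-1)\ \le\ 2(m-1),
\]
which simplifies to $t\ge m$, contradicting $t\le m-1$. Therefore no such $S$ exists.

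I expect the only place needing care is the bookkeeping in Step 1 and the neighbour counts in Step 2 (making sure the ``exact'' hypothesis really forces $t$-regularity of $T$, and correctly handling the single possibly-$S$ slot in direction $1$ for each of $v$ and $v+e_1$); the double count in Step 3 is then immediate. Unlike the $m=2$ argument for $\Pcal(\T^2)$, no auxiliary bipartite graph or case analysis is required here, which is presumably why this is the ``cruder'' argument.
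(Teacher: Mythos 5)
Your proof is correct and is essentially the paper's own argument: both take an adjacent pair of vertices in the complement of $S$, note that each has at least $r-1$ neighbours of $S$ among the $2m-2$ transverse directions, and use independence of $S$ (you via the ``at most one per transverse slot'' inequality, the paper via pigeonhole producing an adjacent pair inside $S$) to get the same contradiction $2(r-1)\le 2m-2$, i.e.\ $r\le m$.
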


\begin{proof}
Suppose that $S\subseteq\Z^m$, $m<r<2m$ and that every point in $\Z^m\setminus S$ has exactly $r$ neighbours in $S$. Let $e_i\in\Z^m$ be the vector with $1$ in position $i$ and $0$s elsewhere. Take $x,y\in\Z^m\setminus S$ with $x=y+e_i$ (so $x,y$ are adjacent in $\Z^m$ and differ in coordinate $i$). We can certainly find such a pair since $r\not=2m$. 

Let $E=\{e_j: j\not=i\}\cup\{-e_j : j\not=i\}$. Let $X=\{e\in E :x+e\in S\}$ and $Y=\{e\in E :y+e\in S\}$. Since $x$ and $y$ each have $r$ neighbours in $S$ we have that $|X|\geq r-1$ and $|Y|\geq r-1$ (the bound is $r-1$ rather than $r$ because $x+e_i$ and $y-e_i$ may be in $S$). But $|E|=2m-2<2(r-1)$ so by the pigeonhole principle $X\cap Y\not=\emptyset$. If $e\in X\cap Y$ then $x+e,y+e$ are adjacent elements of $S$ and so $S$ cannot be an independent set.  
\end{proof}

This argument only shows the non-existence of a set exactly meeting the definition of an independent exact $r$-cover. However, since the argument is a very local one, the same idea shows that $\Pcal(\T^m)$ does not contain the points $\left(\frac{l}{2m},\frac{l^2}{(2m)^2}\right)$ with $1\leq l\leq m-1$. We sketch the argument below.
 
In order for the point $\left(\frac{l}{2m},\frac{l^2}{(2m)^2}\right)$ to be in $\Pcal(\T^m)$, we need colourings of $T_k^m$ for large $k$ with $P_2(R,B)$ arbitrarily close to $\left(\frac{l}{2m},\frac{l^2}{(2m)^2}\right)$ to exist. We say that a vertex in a colouring is good if it is either in $R$ and has $0$ or $2m$ neighbours in $R$, or it is in $B$ and has exactly $l$ neighbours in $B$. Looking at the proof of Theorem \ref{main:thm} we see that for the required colourings to exist we need all but $o(k^m)$ of the vertices to be good. 

In such a colouring, we can certainly find a set $Q$ of $ck^m$ pairs $x,y$ where $x$ and $y$ are adjacent good vertices in $B$ for some constant $c>0$. Now, when $1\leq l\leq m-1$ the argument of Theorem \ref{no-exact-cover:thm} shows that for each pair in $Q$ we can find an element of $R$ which is bad (i.e. with number of neighbours in $R$ strictly between $0$ and $2m$). Since each such element of $R$ can arise from a bounded number of pairs in $Q$ we have at least $c'k^m$ bad elements of $R$ for some constant $c'>0$. This contradiction completes the argument.

We have not attempted to make this more quantitative since the improvement would be small and probably far from the truth. We do not know the exact set $\Pcal(\T^m)$ for $m\geq 3$.

\section{Questions}

The most immediate open problem is to prove an analogue of Theorem \ref{2d-torus} for higher dimensions.

\begin{q}
What is $\Pcal(\T^m)$ for $m\geq 3$?
\end{q}

It could be that, as in the $m=2$ case, the answer is 
\[
\sch\left(\left\{(0,0)\right\}\cup\left\{\left(\frac{l}{2m},\frac{l^2}{(2m)^2}\right):m \leq l \leq 2m\right\}\right).
\]
For $m=3$, Theorem \ref{construction} shows that this set is certainly contained in $\Pcal(\T^3)$ but the true answer may have additional extreme points. For $m\geq 4$, we are missing both constructions and non-existence results.

Closely related to this is the question of when an independent exact $r$-cover of $\Z^m$ exists. 
\begin{q}
For which $r,m$ does an independent exact $r$-cover of $\Z^m$ exist?
\end{q}
As we have seen, these do exist for $r=0,1,2,m,2m$ and do not exist for $r=m+1,m+2,\ldots,2m-1$. All the missing cases of $3\leq r\leq m-1$ are open although we can reduce this to the cases when $r$ and $m$ are coprime (by the second part of Theorem \ref{cover-construction}). In particular, the first open case is $m=4, r=3$. 

We remark that all the constructions in the proof of Theorem \ref{cover-construction} are based on a divisibility condition on a linear combination of the coordinates in $\Z$. A different type of construction of an independent exact $2$-cover in $\Z^3$ is to take all points in $\Z^3$ for which the $3$ coordinates are either all even or all odd. Similarly, when $m=2^l-1$ for some $l\in\N$ we can construct an independent exact $2$-cover by taking all points in $\Z^m$ for which the coordinatewise reduction modulo $2$ gives an element of a fixed Hamming code (the $m=3$ case just described corresponds to the code $\{000,111\}$). 

As well as determining $\Pcal(\G)$ for a particular sequence of graphs, we can ask is there a sequence of graphs for which $\Pcal(\G)$ attains the maximum given by Theorem \ref{main:thm}.

\begin{q}
For all $d\in\N$ is there a sequence of $d$-regular graphs $\G$ for which $\Pcal(\G)=D_d$?
\end{q}

One way of constructing such a sequence would be to find a finite graph with the property that it contains an independent exact $r$-cover for all $0\leq r\leq d$. If we can do this then this graph has colourings with $P_2(R,B)$ achieving each of the extreme points of $D_d$. The sequence of $d$-regular graphs formed by taking an increasing number of disjoint copies of $G$ will have $\Pcal(\G)=D_d$.

A construction of such a graph for $d=3$ is as follows. Take
\[
V=\{a_i : 1\leq i\leq 20\}\cup\{b_i : 1\leq i\leq 20\}
\]
with the vertices $a_1,a_2,\ldots,a_{20}$ and $b_1,b_2,\ldots,b_{20}$ each forming a $20$-cycle. We will add to this a matching between $\{a_i : 1\leq i\leq 20\}$ and $\{b_i : 1\leq i\leq 20\}$ in such a way that in the resulting $3$-regular graph the set
\[
S_1=\{a_4,a_8,a_{12},a_{16},a_{20},b_4,b_8,b_{12},b_{16},b_{20}\}
\]
is an independent exact $1$-cover and the set
\[
S_2=\{a_1,a_3,a_6,a_8,a_{11},a_{13},a_{16},a_{18},b_1,b_3,b_6,b_8,b_{11},b_{13},b_{16},b_{18}\}
\]
is an independent exact $2$-cover. It suffices to take
\[
F=\{(1,5),(2,12),(3,9),(4,18),(6,20),(7,17),(8,14),(10,16),(11,15),(13,19)\}
\]
and  edges
\[
\{(a_i,b_j) : (i,j)\in F\}\cup \{(b_i,a_j) : (i,j)\in F\}.
\]
Since the graph we have constructed is bipartite it also has an independent exact $3$-cover given by one part of the bipartition.

It would be interesting to know if something like this can be done for $d\geq 4$.

\begin{q}
For all $d\in\N$ is there a finite $d$-regular graph $G$ which contains an independent exact $r$-cover for all $0\leq r\leq d$?
\end{q}

Note that the density condition on independent exact $r$-covers forces some divisibilty conditions on the possible size of $G$. The number of vertices $n$ in such a graph must be chosen so that $\frac{rn}{d+r}$ is an integer for all $0\leq r\leq d$.

\noindent
{\bf Note added in proof}: A positive answer to Question 13 has been given by Hou Tin Chau in the very recent preprint \cite{chau}.

Finally, a natural direction to consider is extending this work to $t$-step walks.

\begin{q}
What are the possible values of $P_t(R,B)$ for the torus $T^m_k$?
\end{q} 

The analogue of Theorem \ref{convex:thm} still holds for $t$-step walks (with the same proof) and it is again most natural to work asymptotically. Our construction based on independent exact $r$-covers can be analysed with respect to $t$-step walks. The construction asymptotically achieving the extreme point $\left(\frac{2m-r}{2m},\left(\frac{2m-r}{2m}\right)^2\right)$ for $P_2(R,B)$ also asymptotically achieves $\left(\frac{2m-r}{2m},\left(\frac{2m-r}{2m}\right)^t\right)$ for $P_t(R,B)$. However, we do not have an analogue of Theorem \ref{main:thm} for $t>2$. One obstacle to proving such a result is that the method of counting $2$-step walks using the squares of degrees does not generalise to $t$-step walks. 

\begin{q}
Is there a general bound analogous to Theorem \ref{main:thm} for the possible values of $P_t(R,B)$ in a $d$-regular graph?
\end{q} 

\noindent {\bf Acknowledgement.} We thank David Ellis for useful conversations and the two anonymous referees for their careful reading and comments which improved the exposition of this paper. 

\bibliography{Bibliography}
\bibliographystyle{abbrv}	

\end{document}